\documentclass[11pt,oneside,a4paper]{amsart}

\usepackage{amscd}
\usepackage{amssymb}
\usepackage{amstext}
\usepackage{amsthm}
\usepackage{mathrsfs}

\usepackage{latexsym}
\usepackage{xcolor}
\numberwithin{equation}{section}

\DeclareMathOperator{\Hol}{Hol}
\DeclareMathOperator{\dist}{dist}

\newcommand{\CC}{\mathbb{C}}

\renewcommand{\phi}{\varphi}

\newcommand{\kl}{{\Bbbk_\lambda}}

\newtheorem{Thm}{Theorem}[section]
\newtheorem{theorem}[Thm]{Theorem}
\newtheorem{lemma}[Thm]{Lemma}
\newtheorem{proposition}[Thm]{Proposition}

\begin{document}
\sloppy
\title[Riesz bases
of reproducing kernels]
{Riesz bases
of reproducing kernels in Fock type spaces and de Branges spaces}

\author{Anton Baranov, Yurii Belov, Alexander Borichev}
\address{Anton Baranov,
\newline Department of Mathematics and Mechanics, St.~Petersburg State University, St.~Petersburg, Russia,
\newline National Research University  Higher School of Economics, St.~Petersburg, Russia,
\newline {\tt anton.d.baranov@gmail.com}
\smallskip
\newline \phantom{x}\,\, Yurii Belov,
\newline Chebyshev Laboratory, St.~Petersburg State University, St. Petersburg, Russia,
\newline {\tt j\_b\_juri\_belov@mail.ru}
\smallskip
\newline \phantom{x}\,\, Alexander Borichev,
\newline I2M, CNRS, Centrale Marseille, Aix-Marseille Universit\'e, 13453 Marseille, France, 
\newline  Department of Mathematics and Mechanics, Saint Petersburg State University, St.~Petersburg, Russia,
\newline {\tt alexander.borichev@math.cnrs.fr}
}
\thanks{The work was supported by Russian Science Foundation grant 14-41-00010.}

\begin{abstract} We describe the radial Fock type spaces which possess Riesz bases of normalized reproducing kernels and which are (are not) isomorphic to 
de Branges spaces in terms of the weight functions.
\end{abstract}

\maketitle

\section{Introduction and main results}

Let $\mathcal F$ be a Hilbert space of entire functions. Assume that (i) 
$\mathcal F$ possesses  the division property,
that is, if $f\in\mathcal F$, $f(\lambda)=0$, then $f/(\cdot-\lambda)\in \mathcal F$, and (ii)  
$\mathcal F$ possesses  the bounded point evaluation property,
that is, for each $\lambda \in \CC$,  the mapping $L_\lambda: f \mapsto f(\lambda)$  is a bounded linear functional on $\mathcal F$.  For every 
$\lambda\in \CC$ there exists
 ${\bf k}_\lambda\in \mathcal F$, the
reproducing kernel at $\lambda$ in $\mathcal F$:
\[
f(\lambda)=\langle f, {\bf k}_\lambda \rangle_{\mathcal F}, \qquad f\in\mathcal F.
\]

Let $\kl={\bf k}_\lambda/\|{\bf k}_\lambda\|$
be the normalized reproducing kernel at $\lambda$. 
Given a sequence 
$\Lambda \subset\mathbb C$,
we say that $\{\Bbbk_{\lambda}\}_{\lambda \in \Lambda}$ is 
a Riesz basis (of normalized reproducing kernels) in $\mathcal F$ if it is complete and for some $c,C>0$ we have
$$
c\sum_{\lambda \in \Lambda}|a_\lambda|^2\le \Bigl\|\sum_{\lambda \in \Lambda}
a_\lambda \Bbbk_{\lambda}\Bigr\|^2
\le C\sum_{\lambda \in \Lambda}|a_\lambda|^2,  
$$
for finite sequences $\{a_\lambda\}$ of complex numbers. 
Equivalently, $\{\Bbbk_{\lambda}\}_{\lambda\in \Lambda}$
is a linear isomorphic image
of an orthonormal basis in a separable Hilbert space. 

The de Branges spaces $\mathcal H(\mathcal E)$ are Hilbert spaces of entire functions determined by Hermite--Biehler class entire functions $\mathcal E$, see \cite{br}. The norm in these spaces is 
given by $\|F\|^2_{\mathcal H(\mathcal E)}=\int_{\mathbb R}|F(x)|^2/|\mathcal E(x)|^2\,dx$.
They possess Riesz bases of normalized reproducing kernels at real points $\{t_n\}_{n\in N}$. 
Correspondingly, every space $\mathcal H(\mathcal E)$ can be identified with
the space $\mathcal{H}$ of all entire functions of the form
$$
F(z) = A(z) \sum_{n\in N} \frac{a_n \mu_n^{1/2}}{z-t_n}, \qquad \{a_n\}\in\ell^2,
$$
where $\{t_n\}_{n\in N}$ is an increasing sequence such that $|t_n|\to\infty$,
$|n| \to\infty$, $N=\mathbb Z$, $\mathbb Z_+$ or $\mathbb Z_-$, 
$\sum_{n\in N} \mu_n \delta_{t_n}$ is a positive measure on $\mathbb{R}$
satisfying $\sum_{n\in N} \frac{\mu_n}{t_n^2+1}<\infty$, 
$A$ is an entire function with zero set $\{t_n\}_{n\in N}$ which is real on the real line, and 
the norm of $F$ is defined
as $\|F\|_{\mathcal{H}} = \|\{a_n\}\|_{\ell^2}$.
On the other hand, every $\mathcal F$ as above possessing 
a Riesz basis of normalized reproducing kernels at real points is (up to the norm equivalence) a de Branges space; see, for instance, \cite{BMS}.

Given a continuous function (a weight) $h$ defined on $[0,+\infty)$,
we extend it to the whole complex plane $\CC$ by $h(z)=h(|z|)$, and consider
the Fock type space 
$$
\mathcal F_h=\{f\in\Hol(\CC):\|f\|^2=\|f\|_h^2= \int_{\CC}|f(z)|^2e^{-h(z)}\,dm(z)<\infty\},
$$
where $dm$ is area Lebesgue measure. This is a Hilbert space of entire functions satisfying properties (i) and (ii).

Seip proved in \cite{SW1} that the standard Fock space $\mathcal F_h$, $h(z)=|z|^2$, has no Riesz basis of normalized reproducing kernels.
Later on, it was proved that $\mathcal F_h$ has no Riesz basis of normalized reproducing kernels for sufficiently regular $h$ such that $h(t)\gg(\log t)^2$ \cite{BDK,bl}. 
On the other hand, the spaces $\mathcal F_h$ with $h(t)=(\log t)^\alpha$, $1<\alpha\le 2$, possess Riesz bases of normalized reproducing kernels at real points \cite{bl}, 
and hence, are de Branges spaces. 

In the opposite direction, Theorem~1.2 in \cite{bbb} states that the de Branges space $\mathcal H$ determined by the spectral data 
$(\{t_n\}_{n\in N}, \{\mu_n\}_{n\in N})$ coincides (up to the norm equivalence) with a Fock type space $\mathcal F_h$ if and only if 
(i) the sequence $\{t_n\}_{n\in N}$ is lacunary: $\liminf_{t_n \to \infty} \frac{t_{n+1}}{t_n}>1$, $\liminf_{t_n \to -\infty} \frac{|t_{n}|}{|t_{n+1}|}>1$, and (ii) 
for some $C>0$ and any $n$,
\begin{equation*}
\sum_{|t_k|\leq|t_n|}\mu_k+t^2_n\sum_{|t_k|>|t_n|}\frac{\mu_k}{t^2_k}\le C \mu_n.
\end{equation*}
(These de Branges spaces $\mathcal H$ are characterized by the property that every complete and  minimal system of reproducing kernels is a strong Markushevich basis (strong $M$-basis), see \cite{bbb}.)

The main question we deal with in this paper is to find conditions on $h$ determining whether the Fock type space $\mathcal F_h$ 
has a Riesz basis of (normalized) reproducing kernels and whether $\mathcal F_h$ 
is a de Branges space, or, correspondingly, whether the norm in $\mathcal F_h$ 
is equivalent to a weighted $L^2$ norm along the real line. Our work can be considered as 
completing a series of works  \cite{bl}, \cite{BMS}, and \cite{bbb}. 

From now on let us fix $\psi(t)=h(e^t)$.

\begin{theorem}\label{t1} If $\psi''$ increases to $+\infty$, then $\mathcal F_h$ has no Riesz bases 
of (normalized) reproducing kernels.
\end{theorem}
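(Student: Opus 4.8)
The plan is to use the radial structure to diagonalise everything in the orthogonal basis of monomials, and then to observe that, under the hypothesis $\psi''\nearrow+\infty$, the normalised reproducing kernels become Gaussian wave packets of width tending to infinity — the regime of the classical Fock space rather than of a de Branges space — where Riesz bases of reproducing kernels cannot exist. Concretely, put $\gamma_n^2=\|z^n\|_h^2=2\pi\int_0^\infty r^{2n+1}e^{-h(r)}\,dr=2\pi\int_{\mathbb R}e^{(2n+2)t-\psi(t)}\,dt$. Since $\psi$ is eventually convex with $\psi'\to+\infty$, the integrand peaks at the point $s_n$ with $\psi'(s_n)=2n+2$, and Laplace's method — legitimate because $\psi''$, being monotone, varies slowly on the bump scale $(\psi''(s_n))^{-1/2}$ — gives $\gamma_n^2\asymp(\psi''(s_n))^{-1/2}e^{(2n+2)s_n-\psi(s_n)}$. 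Hence $\|{\bf k}_\lambda\|^2=\sum_{n\ge0}|\lambda|^{2n}\gamma_n^{-2}$, and, writing $\lambda=\rho e^{i\phi}$, the unit vector $v_\lambda=\bigl(\bar\lambda^{\,n}/(\gamma_n\|{\bf k}_\lambda\|)\bigr)_{n\ge0}$ — the image of $\Bbbk_\lambda$ under the unitary identification $\mathcal F_h\simeq\ell^2(\mathbb Z_{\ge0})$, $z^n/\gamma_n\mapsto e_n$ — has the form $v_\lambda(n)=a_\rho(n)e^{-in\phi}$, where $a_\rho(n)>0$ is a discrete Gaussian bump centred at $n^*(\rho)$ (determined by $\psi'(\log\rho)=2n^*(\rho)+2$) of width $\si(\rho)\asymp(\psi''(\log\rho))^{1/2}$. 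Thus $\{\Bbbk_\lambda\}_{\lambda\in\Lambda}$ is a Riesz basis of $\mathcal F_h$ if and only if $\{v_\lambda\}_{\lambda\in\Lambda}$ is a Riesz basis of $\ell^2$.

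Next I would read off the ``Fock geometry''. From the wave‑packet form, $|\langle\Bbbk_\lambda,\Bbbk_\mu\rangle|=|\langle v_\lambda,v_\mu\rangle|$ stays bounded away from $0$ only when $\mu$ lies in the Euclidean disc about $\lambda$ of radius $\de(\lambda):=|\lambda|(\psi''(\log|\lambda|))^{-1/2}=(\Delta h(\lambda))^{-1/2}$; under $\psi''\nearrow+\infty$ one has $\de(\lambda)=o(|\lambda|)$, which is the classical Fock regime (opposite to $\psi''$ bounded, where $\de(\lambda)\gtrsim|\lambda|$ and the Riesz bases of \cite{bl} live on lacunary real sequences). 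A Riesz basis $\{\Bbbk_\lambda\}_{\lambda\in\Lambda}$ is in particular Bessel and uniformly minimal, which forces $\Lambda$ to be $\de$‑separated and $\#(\Lambda\cap\{|z|<R\})\lesssim\psi'(\log R)$; completeness forces the reverse bound, so $\Lambda$ must sit at the critical density — here $\psi'(\log R)\asymp\int_{|z|<R}\Delta h\,dm\asymp\#\{n:s_n<\log R\}$ is the natural ``dimension up to radius $R$''.

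The contradiction, which is the heart of the matter, is that at the critical density a Riesz basis is impossible in the Fock regime. Rescaling by $\de(R)$ in the annulus $\{|z|\asymp R\}$, the weight $e^{-h}$ becomes — with error controlled uniformly in $R$, again because $\psi''$ is monotone — a Gaussian, so $\mathcal F_h$ there is comparable to the classical Fock space on a disc of radius $R/\de(R)\to+\infty$; a Riesz basis of $\mathcal F_h$ restricted to such annuli would then provide, in the limit $R\to+\infty$, a sequence simultaneously sampling and interpolating for the classical Fock space, contradicting \cite{SW1}. (Equivalently, one reproves directly the strict density gap in the Fock regime: sampling forces the lower density strictly above critical and interpolation forces the upper density strictly below critical, while the lower density never exceeds the upper one.)

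I expect the main obstacle to be precisely this last step: converting the heuristic ``local comparison with the classical Fock space'' into an honest statement about Riesz bases — one must show that the part of a global Riesz basis living near $\{|z|\asymp R\}$ genuinely acts like a local Riesz basis there (the contributions of the remaining kernels and of the Gaussian tails must be shown negligible, which is where the monotonicity of $\psi''$, hence the slow variation of $\de$ and of $\Delta h$, is indispensable), with control uniform as $R\to+\infty$. A secondary, easier point is to eliminate ``degenerate'' $\Lambda$ (concentrated in a thin sector, or too sparse): these violate the completeness/density count above. The Laplace asymptotics for $\gamma_n$ and the resulting Gaussian‑wave‑packet description of $v_\lambda$, though technical, are routine given that $\psi''$ is monotone.
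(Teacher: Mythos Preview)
Your overall strategy --- exhibit the local Fock-space geometry and then invoke the sampling/interpolation density gap of \cite{SW1} --- is the paper's strategy as well, and your wave-packet description via the monomial basis is a legitimate alternative route to the paper's explicit polynomial peak functions (Lemmas~\ref{l2}--\ref{l5}). But there is a genuine gap at exactly the point you flag as the main obstacle. You assert, twice, that monotonicity of $\psi''$ forces it to vary slowly on the scale $\tau=(\psi'')^{-1/2}$, and you rely on this both for the uniform Laplace asymptotics of $\gamma_n$ and for the ``error controlled uniformly in $R$'' in the local Gaussian approximation of $e^{-h}$. This is false: an increasing $\psi''$ can jump by a fixed factor on intervals much shorter than $\tau$ (take a smoothed staircase with $\psi''\approx K^{\lfloor t\rfloor}$; near each integer the relative variation over a $\tau$-interval is $K-1$, not small). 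The paper's key device (Lemma~\ref{l1}) is that while slow variation need not hold at every radius, it must hold at \emph{some} arbitrarily large $y$, on an interval of length $A\tau(y)$, for any prescribed $A$: otherwise $\psi''$ would grow geometrically along a sequence $y_n$ with $y_{n+1}-y_n\asymp\tau(y_n)$, forcing $\sum\tau(y_n)<\infty$ and $y_n$ bounded. All of the local analysis --- reproducing-kernel norm estimates, and the $\rho$-separation and $\rho$-density of $\Lambda$ (Lemma~\ref{l6}) --- is then carried out only on annuli $\Omega_{R,A}$ around these carefully selected good radii $R$, not on arbitrary annuli.

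The paper's endgame is also more direct than your proposed limit. Instead of extracting, as $R\to\infty$, an actual sampling-and-interpolating sequence for the classical Fock space --- which would require a compactness argument you do not supply, together with control of the \emph{global} Riesz constants (not just local inner products) under rescaling --- the paper derives a contradiction at a single good $R$ with $A$ large. The Riesz basis property combined with a mean-value estimate applied to $E/[(\cdot-\lambda)\Phi_\lambda]$ forces
\[
\inf_{|z-R|<(N+1)\rho(R)}\ \rho(R)^2\!\!\sum_{\substack{\lambda\in\Lambda\\ |\lambda-R|<N\rho(R)}}\frac{1}{|z-\lambda|^2}\ \le\ C
\]
with $C$ absolute and $N\asymp A$, while the $\rho$-density of $\Lambda$ on $\Omega_{R,A/4}$ makes the left-hand side $\gtrsim\log N$ at every such $z$. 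Choosing $A$ large enough yields the contradiction without any limiting procedure.
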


In this case, $(\log t)^2=o(h(t))$, $t\to\infty$. Under different (in some cases more restrictive) regularity conditions, this result follows from \cite{SW1}, \cite{BDK}, and \cite{bl}.

\begin{theorem}\label{t2} If $\lim_{t\to\infty}\psi'(t)=\infty$, $\psi''$ is a non-increasing positive function, and $|\psi'''(t)|=O(\psi''(t)^{5/3})$, $t\to\infty$, then $\mathcal F_h$ 
has a Riesz basis of (normalized) reproducing kernels at real points, and hence, is (up to the norm equivalence) a de Branges space.
\end{theorem}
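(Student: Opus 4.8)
\emph{Plan of proof.} Keep $\psi(t)=h(e^t)$ fixed and write, for $x>0$, $\Delta h(x)=h''(x)+h'(x)/x=\psi''(\log x)/x^2$, so that $\sqrt{\Delta h(x)}\,dx$ becomes $\sqrt{\psi''(s)}\,ds$ under $s=\log x$; note $\int^\infty\sqrt{\Delta h}=\int^\infty\sqrt{\psi''}=+\infty$ (otherwise, $\psi''$ being non-increasing, $\psi''(T)=o(T^{-2})$ and $\psi'$ would be bounded, contradicting the hypothesis). Since the asserted basis sits at real points, by \cite{BMS} it suffices to construct a real sequence $\Lambda=\{t_n\}_{n\in\mathbb Z}$ for which $\{\Bbbk_{t_n}\}$ is simultaneously a Riesz sequence and complete in $\mathcal F_h$; the de Branges identification is then automatic. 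I would take $\Lambda$ symmetric about the origin and of \emph{critical density}, $t_0=0<t_1<t_2<\cdots$ with $\int_{t_n}^{t_{n+1}}\sqrt{\Delta h}=\pi$ for every $n\ge0$ (well defined by the divergence above), together with the associated generating function
\[
G(z)=z\prod_{n\ge1}\Bigl(1-\frac{z^2}{t_n^2}\Bigr),
\]
which converges since the gaps $\log t_{n+1}-\log t_n\ge\pi\,\psi''(\log t_n)^{-1/2}$ are bounded below, so $t_n$ grows at least geometrically; $G$ is entire, real on $\mathbb R$, with simple zeros exactly at the points of $\Lambda$.

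\emph{The estimates.} The engine is a package of pointwise asymptotics obtained via Laplace's method. From $\|z^n\|^2=2\pi\int_{\mathbb R}e^{(2n+2)u-\psi(u)}\,du$ and two successive saddle points --- first $\psi'(u)=2n+2$, then $\psi'(\log x)\approx 2n$ --- plus the Legendre duality between $\psi$ and its transform, one obtains
\[
\|{\bf k}_x\|^2=\sum_{n\ge0}\frac{x^{2n}}{\|z^n\|^2}\asymp e^{h(x)}\,\Delta h(x),\qquad x\to+\infty,
\]
the Gaussian factors of the two Laplace expansions producing exactly the weight $\Delta h$, with the third-order terms negligible precisely because $|\psi'''|=O((\psi'')^{5/3})$. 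In parallel, a WKB-type analysis of $\log|G|$ --- controlled by the same regularity --- gives, at the peaks on $\mathbb R$, $|G(x)|^2e^{-h(x)}\asymp\sqrt{\Delta h(x)}$ and $|G'(t_n)|^2\asymp e^{h(t_n)}\,\Delta h(t_n)^{3/2}$, and, crucially, a uniform two-sided comparison of $\log|G(x+iy)|$ with $\tfrac12 h(\sqrt{x^2+y^2})$ on the band $|y|\lesssim\Delta h(x)^{-1/2}$ (the natural vertical scale of the space). Localising an area integral near $t_n$ then yields $\bigl\|G(\cdot)\big/\bigl(G'(t_n)(\cdot-t_n)\bigr)\bigr\|_h\asymp\|{\bf k}_{t_n}\|^{-1}$, so that $\{\Bbbk_{t_n}\}$ agrees, up to bounded factors, with $\bigl\{G(\cdot)\big/\bigl(G'(t_n)(\cdot-t_n)\bigr)\bigr\}$.

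\emph{Riesz basis and completeness.} For the upper frame bound one sums the localised estimates: since $|G(x)|\big/|x-t_n|$ decays geometrically in $|n|$ once $|x-t_n|$ exceeds the local spacing, $\sum_n|G(x)|^2\big/\bigl(|G'(t_n)|^2|x-t_n|^2\bigr)\lesssim\|{\bf k}_x\|^2$, and integrating against $e^{-h}$ gives $\bigl\|\sum_na_n\Bbbk_{t_n}\bigr\|_h^2\lesssim\sum_n|a_n|^2$. For the lower bound I would extract from the complex estimates on $G$ the off-diagonal decay $\bigl|\langle\Bbbk_{t_n},\Bbbk_{t_m}\rangle\bigr|\lesssim(1+|n-m|)^{-2}$ and then run a Schur/almost-orthogonality argument, the diagonal being comparable to $1$. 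For completeness, if $f\in\mathcal F_h$ satisfies $f\perp{\bf k}_{t_n}$ for all $n$, then $f(t_n)=0$, so $f/G$ is entire; the bound $|f(z)|\le\|f\|\,\|{\bf k}_z\|\lesssim e^{h(|z|)/2}\Delta h(|z|)^{1/2}$, compared with a lower bound for $|G|$ on circles $|z|=R_k$ taken midway between consecutive zeros, forces via Phragm\'en--Lindel\"of that $f/G$ is bounded and $o(1)$, hence $f\equiv0$; the exact critical density of $\Lambda$ is used here in an essential way. Assembling the three facts and invoking \cite{BMS} proves the theorem; for $h(t)=(\log t)^\alpha$, $1<\alpha\le2$, this reproves the result of \cite{bl}.

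\emph{Main obstacle.} The hard step is the uniform control of $G$ off the real axis, i.e.\ showing $\log|G(x+iy)|=\tfrac12 h(\sqrt{x^2+y^2})+O(1)$ --- up to a bounded harmonic-conjugate term --- throughout a band of width $\asymp\Delta h(x)^{-1/2}$ around $\mathbb R$. This is delicate because $G$ is manufactured from \emph{real} zeros while $h$ is \emph{radial}: the potential-theoretic identity $\Delta\log|G|=2\pi\sum_n\delta_{t_n}$ matches $\Delta(\tfrac12 h)$ only after a genuine two-dimensional averaging, and it is exactly here --- and in the summability of the error terms in the two Laplace expansions --- that $|\psi'''|=O((\psi'')^{5/3})$ is consumed, so one expects the exponent $5/3$ to be the sharp balance point of the argument. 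The monomial computation, the frame bookkeeping, and the passage to a de Branges space are comparatively routine once these estimates are in place.
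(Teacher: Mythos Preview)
Your plan imports the Nyquist/critical-density picture from the rapid-growth regime into a setting where it does not apply, and this breaks the argument at two places.

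\smallskip
\textbf{The kernel estimate is wrong.} The asymptotic $\|{\bf k}_x\|^2\asymp e^{h(x)}\Delta h(x)$ fails here. In your double saddle-point, the first Laplace expansion (for $\|z^n\|^2$) does produce a factor $(\psi'')^{-1/2}$, but the second one --- the sum $\sum_n x^{2n}/\|z^n\|^2$ --- has Gaussian width $\asymp\sqrt{\psi''}$ in the variable $n$. Since $\psi''$ is \emph{bounded} (it is non-increasing) and may tend to $0$, at most a bounded number of terms contribute, and no second Gaussian factor appears. One gets instead
\[
\|{\bf k}_{\lambda_n}\|^2\asymp\frac{e^{h(\lambda_n)}}{\lambda_n\,\rho(\lambda_n)}=\frac{e^{\psi(y_n)}\sqrt{\psi''(y_n)}}{\lambda_n^{2}},
\]
which the paper isolates explicitly (Lemma~\ref{l2q}) as the key distinction from the situation of Theorem~\ref{t1}.

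\smallskip
\textbf{The density, and hence $G$, are wrong.} More damagingly, the rule $\int_{t_n}^{t_{n+1}}\sqrt{\Delta h}=\pi$ gives the wrong zero counting function. With your choice, $n(R)\sim\frac1\pi\int_0^{\log R}\sqrt{\psi''}$, so
\[
\log|G(R)|\sim\frac{2}{\pi}\int_0^{\log R}\!\!\int_0^{s}\sqrt{\psi''(u)}\,du\,ds,
\]
whereas $h(R)/2=\tfrac12\psi(\log R)\sim\tfrac12\int_0^{\log R}\!\int_0^{s}\psi''(u)\,du\,ds$; equality of the leading terms would force $\psi''\equiv 16/\pi^2$. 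For any other behaviour of $\psi''$ (in particular whenever $\psi''\to0$), $|G(z)|e^{-h(z)/2}$ either blows up or collapses on large circles, so $G/(\cdot-t_n)$ is not in $\mathcal F_h$ (or the system is far from complete), and the WKB identity $\log|G(x+iy)|=\tfrac12h(\sqrt{x^2+y^2})+O(1)$ you aim for is impossible with these zeros. The density that actually works is the one dictated by the monomial basis: take $\lambda_n>0$ with $\psi'(\log\lambda_n)=2n+2$, which gives logarithmic spacing $\asymp 1/\psi''$ (not $1/\sqrt{\psi''}$), and the one-sided product $E(z)=\prod_{n\ge0}(1-z/\lambda_n)$. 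The paper then handles the interpolation operator $R_\Lambda$ not via Gram/Schur bounds but by verifying the Carleson-measure criterion of \cite[Theorem~1.1]{BMS} for $|E|^2e^{-h}\,dm$; your claimed off-diagonal decay $(1+|n-m|)^{-2}$ is never established and in any case rests on the incorrect size of $G$.
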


\begin{theorem}\label{t3} Given any convex positive function $\phi$ with $x=o(\phi(x))$, $x\to\infty$, there exists a function $\psi$ increasing to $+\infty$ and satisfying 
the property $\psi(t)=o(\phi(t))$, 
$t\to\infty$, such that $\mathcal F_h$ has a Riesz basis   
of (normalized) reproducing kernels and is not (up to the norm equivalence) a de Branges space.
\end{theorem}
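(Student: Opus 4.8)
\emph{Proof plan.}
The plan is to build a weight whose Laplacian $\psi''$ has a background value tending to $0$, interrupted by a very sparse sequence of short, tall spikes. The background keeps $\psi=o(\phi)$ and makes the reproducing kernels look like single monomials wherever $\psi''$ is small; the spikes make $\mathcal F_h$ locally ``Fock--like'' at a sequence of scales, which will force any Riesz basis of reproducing kernels to use genuinely complex points, while the spikes are short and rare enough not to spoil $\psi=o(\phi)$. Throughout put $\gamma_n=\|z^n\|_h^2$; since the statement is of interest only when $\mathcal F_h$ is infinite--dimensional, $\psi$ will be superlinear, so all polynomials lie in $\mathcal F_h$, the normalized monomials form an orthonormal basis, and $\mathbf k_\lambda=\sum_n\bar\lambda^nz^n/\gamma_n$. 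For $\psi$ regular enough the Laplace method gives, up to a lower--order term, that $\log\gamma_n$ equals the Legendre transform of $\psi$ at $2n$; hence the ``mass'' $n\mapsto|\lambda|^{2n}/\gamma_n$ of $\mathbf k_\lambda$ is a bump centered at $n_0(\lambda)\approx\tfrac12\psi'(\log|\lambda|)$ of width $\asymp\psi''(\log|\lambda|)^{1/2}$, and $\langle\Bbbk_\lambda,\Bbbk_\mu\rangle$ is exponentially small once the associated $n$--bumps are disjoint, or, when $|\lambda|=|\mu|$, once $|\arg\lambda-\arg\mu|\gtrsim\psi''(\log|\lambda|)^{-1/2}$. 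These localization facts are all we use.

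\emph{The weight.}
Given the convex superlinear $\phi$, first choose a convex $\psi_0$ with $t=o(\psi_0(t))$ and $\psi_0(t)=o(\phi(t))$ whose second derivative decays to $0$ fast enough that an off--spike kernel differs from a monomial by a tail that is summably small in $n$; such $\psi_0$ exists because $\phi$ is superlinear. Then pick unit intervals $I_k=[u_k,u_k+1]$ and heights $M_k\uparrow\infty$, and take $\psi$ smooth and convex with $\psi''$ equal to $\psi_0''$ off $\bigcup_kI_k$, equal to $M_k$ on the middle of $I_k$, and with gentle transitions so that $|\psi'''|=O((\psi'')^{5/3})$ (the regularity exploited in \cite{bl}). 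Choosing the $u_k$ to grow so fast that $\phi(u_k)/u_k\ge 2^k\sum_{j\le k}M_j$ and using that $U\mapsto\phi(U)/U$ increases, a direct computation gives $\psi(U)\le\psi_0(U)+2^{-k(U)}\phi(U)+o(\phi(U))=o(\phi(U))$, while plainly $\psi\uparrow\infty$. Put $W_k\asymp M_k$ for the number of integers $n$ with $2n\in\psi'(I_k)$, and $B_k=\mathrm{span}\{z^n:\ 2n\in\psi'(I_k)\}$, so $\dim B_k\asymp W_k$.

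\emph{No real--point Riesz basis.}
Suppose $\{\Bbbk_{t_n}\}$ with $t_n\in\mathbb R$ were a Riesz basis. Letting $P_k$ be the orthogonal projection onto $B_k$, the images $\{P_k\Bbbk_{t_n}\}_n$ form a frame of $B_k$ with bounds independent of $k$, so $\sum_n\|P_k\Bbbk_{t_n}\|^2\gtrsim\dim B_k\asymp W_k$; testing only against the inner half of $B_k$ (which only ``fat'' kernels, i.e.\ those with $\log|t_n|\in I_k$, can reach, since off $I_k$ a kernel is essentially a monomial) and invoking the exponential localization of the kernels, this forces at least $\gtrsim W_k$ of the $t_n$ to satisfy $\log|t_n|\in I_k$. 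But for \emph{real} $t$ the bump $n\mapsto t^{2n}/\gamma_n$ carries no oscillation, so two equal--sign points $t_n,t_{n'}$ with $\big|\log|t_n|-\log|t_{n'}|\big|=o(M_k^{-1/2})$ have overlapping $n$--bumps of matching phase and $|\langle\Bbbk_{t_n},\Bbbk_{t_{n'}}\rangle|\to1$, contradicting the lower Riesz bound; hence at most $O(M_k^{1/2})$ of the $t_n$ lie over $I_k$. Since $W_k\asymp M_k\gg M_k^{1/2}$, this is impossible for large $k$. As a Fock type space with a real--point Riesz basis of reproducing kernels is, up to norm equivalence, a de Branges space (see \cite{BMS, bbb}), $\mathcal F_h$ is not a de Branges space.

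\emph{A complex--point Riesz basis, and the crux.}
For $\Lambda$ take: off the spikes, the real points $r$ with $n_0(r)\in\mathbb Z$, giving monomial--like kernels indexed by the off--band integers; and over each $I_k$, the $\asymp W_k^{1/2}$ radii $r_{k,i}$ with $n_0(r_{k,i+1})-n_0(r_{k,i})\asymp M_k^{1/2}$, each one taken with the $\asymp M_k^{1/2}$ equally spaced arguments $2\pi\ell M_k^{-1/2}$. Projected onto the rank--$\lceil M_k^{1/2}\rceil$ sub--band around $n_0(r_{k,i})$, the kernels at $r_{k,i}e^{2\pi i\ell M_k^{-1/2}}$ become, up to a diagonal factor $n\mapsto r_{k,i}^{2n}/\gamma_n$ that is $\asymp1$ across that sub--band, the rows of a discrete Fourier matrix, hence a Riesz basis of the sub--band with \emph{absolute} bounds; so the $W_k$ kernels over $I_k$ are a Riesz basis of $B_k$ with absolute bounds. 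As kernels attached to different bands, or to a band and an off--band monomial, have $n$--ranges at positive distance up to Gaussian tails, the Gram matrix of $\Lambda$ is banded and boundedly invertible on every window of $O(1)$ consecutive bands; the localization principle for banded operators then yields global invertibility, so $\{\Bbbk_\lambda\}_{\lambda\in\Lambda}$ is a Riesz sequence, and it is complete — hence a Riesz basis — because its members' $n$--ranges exhaust $\mathbb Z_{\ge0}$ (a density argument using the division property of $\mathcal F_h$). I expect the two genuinely delicate points to be: (a) the uniform asymptotics of $\gamma_n$ and of the kernels through the transition zones of the spikes, where $\psi''$ varies quickly, which is exactly what the condition $|\psi'''|=O((\psi'')^{5/3})$ is designed to control, as in \cite{bl}; and (b) upgrading the block--wise invertibility of the Gram matrix to a global statement, where the band edges — at which a band kernel overlaps non--negligibly with a neighboring monomial — must be absorbed by a local perturbation / finite--section argument rather than by a norm bound.
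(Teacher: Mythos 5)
Your overall architecture (sparse ``spikes'' of $\psi''$ on a slowly growing background, dimension counting in the monomial bands $B_k$ to exclude real points, a Fourier-type complex configuration over each spike) is appealing, but the construction as specified fails at its central step. With $\psi''\equiv M_k\to\infty$ on a \emph{unit} interval $I_k$, the spike occupies $\asymp M_k^{1/2}\to\infty$ units of the local metric $\tau=(\psi'')^{-1/2}$; equivalently, the annulus $e^{u_k}\le|z|\le e^{u_k+1}$ has $\rho$-diameter $\asymp M_k^{1/2}$ and $\rho$-area $\asymp M_k\asymp\dim B_k$. This is exactly the regime of Theorem~\ref{t1}: Lemmas~\ref{l1}--\ref{l6} and the concluding Hilbert-transform estimate apply verbatim inside such a spike (take $R=e^{u_k+1/2}$ and $A\asymp M_k^{1/2}$ in Lemma~\ref{l3}; the global bound $|Q|\le Me^h$ uses only convexity of $\psi$, which you assume), and for $k$ large they contradict the separation/density constraints that any Riesz basis must satisfy there. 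So your space $\mathcal F_h$ admits \emph{no} Riesz basis of reproducing kernels whatsoever, and the claimed complex-point basis cannot exist. The concrete false step in your argument is the assertion that the Gram blocks of the $\asymp M_k$ kernels over $I_k$ are ``at positive distance up to Gaussian tails'': the kernels at consecutive radii $r_{k,i},r_{k,i+1}$ have $n$-profiles of width $\asymp M_k^{1/2}$ whose centers are separated by exactly $\asymp M_k^{1/2}$, so adjacent sub-band blocks overlap at unit scale. Your configuration ($M_k^{1/2}$ translates of a Gaussian window of width $M_k^{1/2}$, each with $M_k^{1/2}$ modulations) is a Gaussian Gabor system at critical density, whose lower Riesz bound degenerates as $M_k\to\infty$ (the Balian--Low/Seip phenomenon that underlies Theorem~\ref{t1}); no block/finite-section argument can repair this, because the obstruction is a density obstruction, not a bookkeeping one.

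The repair forces the spikes to have $\tau$-length $O(1)$, i.e.\ $|I_k|\lesssim M_k^{-1/2}$, so that each spike contributes a band of only $\asymp M_k^{1/2}$ monomials covered by a \emph{single} circle of $\asymp M_k^{1/2}$ equidistributed points --- and this is precisely the paper's construction: $\psi$ piecewise linear (a corner, i.e.\ a delta-mass of $\psi''$, at each $\log R_n$; note the paper's $\psi$ is increasing but deliberately \emph{not} convex), $E(z)=\prod_n(1-(z/R_n)^n)$, with the same-circle Gram blocks handled by an explicit circulant computation and the cross-circle terms killed by the lacunarity $R_{n+1}\ge R_n^2$. Two further remarks. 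First, your exclusion of real points leaves the sum $\sum_{\log|t_n|\notin I_k}\|P_k\Bbbk_{t_n}\|^2=o(M_k)$ unjustified (there could be many far points each contributing a little); it is fixable via the Bessel bound $\sum_n\|P\Bbbk_{t_n}\|^2\le C\,\mathrm{rank}\,P$, but note that the paper avoids this route entirely: it proves ``not a de Branges space'' by exhibiting, for each $n$, two functions ($E_{R_n}$ and the Blaschke-type modification $f_n$) with equal modulus on $\mathbb R$ but norms differing by a factor $\ge n$, which directly rules out any equivalent weighted $L^2(\mathbb R)$ norm and is both more elementary and more robust than arguing through real-point Riesz bases. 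Second, your uniform Laplace asymptotics for $\gamma_n$ through the transition zones are indeed delicate, but in the corrected (piecewise-linear) setting they reduce to the elementary product estimates of Lemma~\ref{l2}, so the condition $|\psi'''|=O((\psi'')^{5/3})$ is not needed here at all.
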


\subsection{}\label{ss11} Let $\{{\bf k}_\lambda\}_{\lambda\in\Lambda}$ be a complete minimal family of the reproducing kernels in $\mathcal F_h$. Then there exists a so called 
generating function $E$ with simple zeros at the points of $\Lambda$ such that $E\not\in\mathcal F_h$ and $E_\lambda=E/(\cdot-\lambda)\in\mathcal F_h$ 
for every $\lambda\in\Lambda$. The family $\{\|{\bf k}_\lambda\|E_\lambda/E'(\lambda)\}_{\lambda\in\Lambda}$ is biorthogonal to $\{\kl\}_{\lambda\in\Lambda}$. 
Consider the mappings $T_\Lambda:f\mapsto f|\Lambda$ and 
$$
R_\Lambda:\{a_\lambda\}_{\lambda\in\Lambda} \mapsto E(z)\sum_{\lambda\in\Lambda} \frac{a_\lambda}{E'(\lambda)} \cdot \frac{1}{z-\lambda}.
$$
for finite sequences $\{a_\lambda\}_{\lambda\in\Lambda}$. Then $T_\Lambda R_\Lambda=Id$. If $T_\Lambda$ is bounded from $\mathcal F_h$ to 
$\ell^2(1/\|{\bf k}_\lambda\|^2)$ and $R_\Lambda$ is bounded from $\ell^2(1/\|{\bf k}_\lambda\|^2)$ to $\mathcal F_h$, then the family 
$\{\|{\bf k}_\lambda\|E_\lambda/E'(\lambda)\}_{\lambda\in\Lambda}$ is a Riesz basis, and hence $\{\kl\}_{\lambda\in\Lambda}$ is a Riesz basis of normalized 
reproducing kernels in $\mathcal F_h$ (see, for instance, \cite[Section C.3.1]{nik}).

\subsection{} In this paper, we deal mainly with non-decreasing weights $h$. This does restrict the class of Fock type spaces we consider.

\begin{proposition}\label{ex}
There exist continuous $h_0$ such that $\mathcal F_{h_0}$ does not coincide 
(up to the norm equivalence) with $\mathcal F_h$ for any non-decreasing weight $h$. 
\end{proposition}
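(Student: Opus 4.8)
The plan is to construct $h_0$ so that $\mathcal F_{h_0}$ ``oscillates'' in a way that no non-decreasing weight can match. The key observation is that if $\mathcal F_{h_0}$ and $\mathcal F_h$ coincide up to norm equivalence, then in particular the norms of the monomials $z^n$ must be comparable: $\|z^n\|_{h_0}^2 \asymp \|z^n\|_h^2$ for all $n$. Since $h$ and $h_0$ are radial, $\|z^n\|_h^2 = 2\pi\int_0^\infty r^{2n+1}e^{-h(r)}\,dr$, and a Laplace-type asymptotic analysis shows that, for reasonable weights, $\log\|z^n\|_h^2$ is essentially the Legendre-type transform of $h$ evaluated at $2n+2$. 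So a natural first step is to reduce the non-coincidence of the spaces to the non-comparability of these monomial norm sequences, and then to the statement that a certain convex conjugate of $h_0$ cannot be matched, up to an additive bounded term, by the convex conjugate of any non-decreasing $h$.

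Concretely, I would proceed as follows. First, recall (or quickly prove) that if $\mathcal F_{h_0} = \mathcal F_h$ with equivalent norms, then since $\{z^n\}$ is an orthogonal system in every radial Fock space, the two-sided estimate on norms of finite linear combinations forces $\|z^n\|_{h_0} \asymp \|z^n\|_h$ uniformly in $n$. Second, I would set up the Laplace asymptotics: writing $M_h(x) = \log\bigl(2\int_0^\infty r^{x}e^{-h(r)}\,dr\bigr)$ (so $\|z^n\|_h^2 = \pi e^{M_h(2n+1)}$ up to the $2\pi$ factor), one has, under mild regularity, $M_h(x) \approx \sup_{r>0}\bigl(x\log r - h(r)\bigr) = \sup_{s\in\mathbb R}\bigl(xs - \psi_h(s)\bigr)$ where $\psi_h(s) = h(e^s)$; the point is that $M_h$ is (essentially, up to $O(\log x)$ errors from the Laplace method) the Legendre transform of $\psi_h$, hence convex, and non-decreasing weights $h$ give a restricted family of such transforms. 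Third — and this is the heart — I would build $\psi_{h_0}$ to be a piecewise-linear convex function whose slopes jump, on a sparse sequence of scales, in such a way that its Legendre transform $M_{h_0}$ has long intervals on which it is \emph{affine with a prescribed slope that is not an integer} and, crucially, such that requiring $M_h$ to stay within a bounded distance of $M_{h_0}$ at \emph{all} integers $x = 2n+1$ forces $\psi_h$ to be non-monotone. The mechanism: a non-decreasing $h$ corresponds to $\psi_h$ with $\psi_h' \ge 0$; I would arrange the affine pieces of $M_{h_0}$ to have slopes that would require $\psi_h$ to ``turn around,'' i.e. to have $\psi_h' < 0$ somewhere, to keep up at the sampled integer points.

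The main obstacle will be controlling the Laplace-method error terms well enough that the reduction to a statement purely about Legendre transforms is rigorous: the approximation $M_h(x) \approx \psi_h^*(x)$ holds only up to errors that depend on the local behavior (second derivative, or the size of the ``plateau'') of $\psi_h$ near the critical point, and for weights that are merely continuous and non-decreasing this can be delicate. I expect to handle this by choosing $h_0$ itself to be quite regular (smooth, with $\psi_{h_0}''$ controlled) so that its side is clean, and by using only crude upper/lower bounds — e.g. $\int_0^\infty r^x e^{-h(r)}\,dr \le C(1+x)\,\sup_r r^x e^{-h(r)}$ together with a matching lower bound from integrating over a short interval around the maximizer — on the side of the unknown non-decreasing $h$; these crude bounds lose only $O(\log x)$, which is harmless compared to the integer-scale gaps I will engineer. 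A secondary technical point is to ensure the constructed $h_0$ is genuinely just continuous (as claimed) — or, if a smooth example is easier, one may simply note that smoothness is a stronger property — and to make sure the division and bounded-point-evaluation properties (which hold automatically for any such radial Fock space) are in place so that ``coincidence up to norm equivalence'' is the right notion; but these are routine once the monomial-norm obstruction is established.
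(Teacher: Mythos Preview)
Your reduction to the comparability of monomial norms is correct and is also the paper's first step. The gap is in step~3, where you have not actually identified a property of the sequence $\{\|z^n\|^2_h\}$ that distinguishes non-decreasing $h$ from general $h$. For \emph{any} radial weight $h$, Cauchy--Schwarz gives that $n\mapsto \|z^n\|^2_h=\int_0^\infty x^n u_h(x)\,dx$ (with $u_h(x)=\pi e^{-h(\sqrt x)}$) is log-convex; hence $M_h$ is always convex, and after passing to Legendre transforms you see no qualitative difference between monotone and non-monotone $h$. Your proposed mechanism --- choosing affine pieces of $M_{h_0}$ with ``non-integer slopes'' so as to force $\psi_h'<0$ --- does not work: the slopes of $M_h$ are $s$-values (positions of the maximizer), and for $x=2n+1\ge 1$ the supremum $\sup_s(xs-\psi_h(s))$ is attained where $\psi_h'\ge 1$, so $M_h$ at positive integers is completely insensitive to whether $\psi_h'$ ever becomes negative elsewhere.

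The missing idea, which the paper supplies, is this: if $h$ is non-decreasing then $u_h$ is non-increasing, so after a $C^1$ regularization one may integrate by parts to get
\[
\int_0^\infty x^n u_h(x)\,dx=\frac{1}{n+1}\int_0^\infty x^{n+1}\bigl(-u_h'(x)\bigr)\,dx,
\]
exhibiting $(n+1)\|z^n\|^2_h$ as (up to bounded factors) a moment sequence of the \emph{positive} measure $-u_h'\,dx$, hence log-convex. Thus monotonicity of $h$ forces the sharper convexity
\[
\bigl\{\log\bigl[(n+1)\|z^n\|^2_{h}\bigr]+c_n\bigr\}_{n\ge 1}\text{ is convex for some bounded }\{c_n\}.
\]
One then constructs $h_0$ (via a sparse discrete measure) so that $\log\|z^n\|^2_{h_0}$ is, up to bounded terms, \emph{exactly affine} on intervals $[t_k+1,t_{k+1}]$ with $t_{k+1}\ge t_k^2$; adding $\log(n+1)$ makes such a sequence strictly concave by an amount $\asymp\log t_k\to\infty$ above the chord, which no bounded correction can repair. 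Note that this obstruction lives precisely at the $O(\log n)$ scale in $\log\|z^n\|^2$ --- exactly the scale you proposed to discard as ``harmless'' Laplace-method error. That dismissal is where your plan breaks down.
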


Let us introduce some notation.  If $\psi$ is convex, then $h$ is subharmonic and we set 
$\rho(z)=(\Delta h(z))^{-1/2}$, $\tau(t)=(\psi'')^{-1/2}(t)=\rho(e^{t})e^{-t}$. 
Later on, we'll see that $\rho$ will be a local metric coefficient and $\tau$ will be 
a local metric coefficient in the logarithmic scale. 
Given an analytic function $f$, we denote by $Z(f)$ its zero set.

The proofs of Theorems~\ref{t1}--\ref{t3} are contained in Sections~\ref{pt1}--\ref{pt3}, correspondingly. Proposition~\ref{ex} is proved in 
Section~\ref{pex}.

\section{Proof of Theorem~\ref{t1}}
\label{pt1}

We follow the method proposed in \cite{SW1} and later used in \cite{bl}. It turns out that in some large annuli we can obtain precise estimates on the norm of the reproducing kernel. If $\{\kl\}_{\lambda\in\Lambda}$ is a Riesz basis, then $\Lambda$ is $\rho$-separated and $\rho$-dense on these annuli 
(see Lemma~\ref{l6} below) and this implies that the corresponding Hilbert transform is unbounded which leads to a contradiction.

We suppose in this section that $\psi''$ increases to $+\infty$, $\tau$ decreases to $0$.

\begin{lemma}
\label{l1}
Given $A<\infty$, $y_0>0$, there exists $y\ge y_0$ such that
\begin{equation}
\label{e1}
\Bigl|\frac{\psi''(x)}{\psi''(y)}-1\Bigr|\le \frac 1A,
\end{equation}
if $|x-y|\le A\tau(y)$.
\end{lemma}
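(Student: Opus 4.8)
We want to show that although $\psi''$ goes to infinity, it does so "slowly enough" on a suitable sequence of scales that it is essentially constant on intervals of length comparable to $A\tau(y)$. Since $\tau(y) = (\psi''(y))^{-1/2}$ shrinks to $0$, the natural length scale $A\tau(y)$ also shrinks, so morally we only need $\psi''$ not to grow too fast. The obstruction is that, a priori, $\psi''$ could grow very fast on some intervals; the way around it is a pigeonhole/mean-value argument along a sequence of scales.

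First I would reformulate \eqref{e1}: writing $\delta = A\tau(y)$, we need
$$
\sup_{|x-y|\le \delta}\Bigl|\frac{\psi''(x)}{\psi''(y)}-1\Bigr|\le \frac1A.
$$
Because $\psi''$ is increasing, the supremum is controlled by $\psi''(y+\delta)/\psi''(y) - 1$ on the right end and $1 - \psi''(y-\delta)/\psi''(y)$ on the left; so it suffices to find $y\ge y_0$ with
$$
\psi''(y+A\tau(y)) \le \Bigl(1+\tfrac1A\Bigr)\,\psi''(y), \qquad
\psi''(y-A\tau(y)) \ge \Bigl(1-\tfrac1A\Bigr)\,\psi''(y).
$$
The second inequality follows from the first one applied at a slightly smaller point together with monotonicity, so the real content is the upper bound: we must catch $y$ where $\psi''$ does not increase by more than a factor $1+1/A$ over the window $[y, y+A\tau(y)]$.

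Now comes the main step, a telescoping argument. Suppose, for contradiction, that the upper bound fails for all $y\ge y_0$, i.e. $\psi''(y + A\tau(y)) > (1+1/A)\psi''(y)$ for every $y \ge y_0$. Define inductively $y_0' := \max(y_0, \text{something})$ and $y_{k+1} = y_k + A\tau(y_k)$. Then $\psi''(y_k) \ge (1+1/A)^k \psi''(y_0')$, so $\psi''(y_k)$ grows geometrically, hence $\tau(y_k) = (\psi''(y_k))^{-1/2}$ decays geometrically, and therefore the series $\sum_k A\tau(y_k)$ converges. This means $y_k \uparrow y_\infty < \infty$. But on $[y_0', y_\infty)$ the function $\psi''$ is increasing and unbounded (it exceeds $(1+1/A)^k\psi''(y_0')\to\infty$), contradicting the fact that $\psi$ is twice differentiable — in particular $\psi''$ is finite — at the point $y_\infty$. (If one only assumes $\psi''$ defined and increasing to $+\infty$ without continuity at every point, finiteness of $\psi''(y_\infty)$ together with monotonicity still gives $\sup_{[y_0',y_\infty)}\psi'' \le \psi''(y_\infty) < \infty$, the same contradiction.) Hence the upper bound must hold for some $y\ge y_0$, and we are done.

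**Where the difficulty lies.** The argument is short; the only subtlety is bookkeeping with the factor $A$ and making sure the geometric decay of $\tau(y_k)$ genuinely forces $\sum A\tau(y_k) < \infty$ — this needs $(1+1/A)^{-1/2} < 1$, which is automatic. A minor point is handling the left endpoint $y - A\tau(y)$: one applies the upper-bound conclusion not at $y$ itself but at the point $y' = y - A\tau(y)$, checking that $A\tau(y') \ge A\tau(y) \cdot (\text{const})$ so that $y'$ is still within reach of $y$; this only costs adjusting $A$ by a bounded factor, which is harmless since $A$ was arbitrary and we can prove the statement with $A$ replaced by, say, $2A$ throughout. I do not expect any real obstacle beyond this.
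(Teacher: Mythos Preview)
Your approach is essentially the same as the paper's: both run the iteration $y_{k+1}=y_k+cA\tau(y_k)$, derive geometric growth of $\psi''(y_k)$ under the contradiction hypothesis, and conclude that $y_k$ converges to a finite limit where $\psi''$ would have to be infinite. The only difference is in how the two-sided estimate is recovered: the paper uses step size $4A\tau$ to find $z$ with $\psi''(x)<\tfrac{A+1}{A}\psi''(z)$ on $[z,z+4A\tau(z)]$, then sets $y=z+2A\tau(z)$ so that $[y-A\tau(y),y+A\tau(y)]\subset[z,z+4A\tau(z)]$ (since $\tau(y)\le\tau(z)$ by monotonicity), which handles both endpoints at once; your suggestion to ``apply the upper-bound conclusion at $y'=y-A\tau(y)$'' doesn't quite work as stated since the conclusion holds only at the one point produced by the iteration, but your remark about doubling $A$ amounts to the same midpoint trick.
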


For similar statements see, for instance, \cite{ha}.

\begin{proof}
Without loss of generality, $A$ is sufficiently large. Set
$$
y_n=y_{n-1}+4A\tau(y_{n-1}),\qquad n\ge 1,
$$
and suppose that 
$$
\psi''(y_n)\ge \frac{A+1}{A}\psi''(y_{n-1}),\qquad n\ge 1.
$$
Then
$$
\psi''(y_n)\ge \Bigl(\frac{A+1}{A}\Bigr)^n\psi''(y_0),\qquad n\ge 1,
$$
and, hence, 
\begin{gather*}
\sum_{n\ge 1}(y_n-y_{n-1})<\infty,\\
\lim_{n\to\infty}y_n=y<\infty,
\end{gather*}
which is absurd. 

Hence, there exists $z\ge y_0$ such that 
$$
\psi''(z)\le \psi''(x)< \frac{A+1}{A}\psi''(z),\qquad 0\le x-z\le 4A\tau(z).
$$
It remains to set $y=z+2A\tau(z)$. For large $A$ we obtain \eqref{e1}.
\end{proof}

\begin{lemma}
\label{l2}
Let $B>0$, $N<\infty$. 
Given $e^y=R_1<R_2<\ldots < R_N$, $R_{N+1}=+\infty$, and integers $k_1,\ldots,k_N$ 
such that
\begin{gather*}
\frac 1B\le \frac{R_{j+1}-R_j}{e^y\tau(y)}\le B,\qquad 1\le j<N,\\
\frac 1B\le k_j\tau(y)\le B,\qquad 1\le j\le N,
\end{gather*}
we set
$$
P(z)=\prod_{1\le j\le N}\Bigl(1-\Bigl(\frac{z}{R_j}\Bigr)^{k_j}\Bigr).
$$
Next, we set
$$
v(t)=\begin{cases}
0,\qquad t<R_1,\\
\sum_{1\le s\le j}k_s(\log t-\log R_s),\qquad R_j\le t< R_{j+1},\, 1\le j\le N.
\end{cases}
$$
Then for every $\delta>0$ for some $D=D(\delta,B)$ independent of $N$ and for all $y>y(\delta,\psi,B)$ 
we have 
$$
\bigl|\log|P(z)|-v(|z|)\bigr|\le D(\delta,B) 
$$
if $\dist(z,Z(P))>\delta e^y\tau(y)$.
\end{lemma}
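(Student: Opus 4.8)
The plan is to split $\log|P|$ into a leading piecewise‑logarithmic part, which will turn out to be exactly $v$, and a remainder that decays geometrically away from one critical index. Set $\ell_j(z)=\max(\log(|z|/R_j),0)$ and $g_j(z)=\log|1-(z/R_j)^{k_j}|-k_j\ell_j(z)$. The terms $k_j\ell_j$ that do not vanish are precisely those with $R_j\le|z|$, and for $|z|\in[R_{j_0},R_{j_0+1})$ these are the indices $1,\dots,j_0$, so $\sum_{1\le j\le N}k_j\ell_j(z)=\sum_{s\le j_0}k_s(\log|z|-\log R_s)=v(|z|)$ and hence
\[
\log|P(z)|-v(|z|)=\sum_{1\le j\le N}g_j(z).
\]
The reason for subtracting $k_j\ell_j$ rather than $k_j\log(|z|/R_j)$ is that it symmetrizes $g_j$: one has $g_j(z)=\log|1-u_j|$ with $u_j=(z/R_j)^{k_j}$ when $|z|\le R_j$ and $u_j=(R_j/z)^{k_j}$ when $|z|>R_j$, so $|u_j|=(\min(|z|,R_j)/\max(|z|,R_j))^{k_j}\le1$, and in particular $g_j(z)\le\log2$ for every $j$.

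Next I would exploit the hypotheses to show $|u_j|\le e^{-c(B)|j-j_0|}$ for $|j-j_0|\ge2$. Since $R_{j_0}\le|z|<R_{j_0+1}$ and consecutive $R_i$ differ by at least $e^y\tau(y)/B$, we get $\bigl|\,|z|-R_j\bigr|\ge c_1(B)|j-j_0|e^y\tau(y)$; as $|z|$ and the relevant $R_j$ are of order $e^y$, this gives $1-\min(|z|,R_j)/\max(|z|,R_j)\ge c_2(B)|j-j_0|\tau(y)$, and raising to the power $k_j$ while using $k_j\tau(y)\ge1/B$ yields the claimed decay. Consequently $\bigl|\log|1-u_j|\bigr|\le2|u_j|\le2e^{-c(B)|j-j_0|}$ once $|u_j|\le1/2$, so $\sum_{|j-j_0|\ge L_0(B)}|g_j(z)|\le C(B)$ for a suitable $L_0(B)$. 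It remains to bound the at most $2L_0+1$ central terms. Here I use $g_j\le\log2$ together with a lower bound coming from $\dist(z,Z(P))>\delta e^y\tau(y)$: near its nearest zero $\zeta$ on $\{|w|=R_j\}$ the function $1-(z/R_j)^{k_j}$ is close to its linear part, whose coefficient has modulus $k_j/R_j$, so $|1-(z/R_j)^{k_j}|\ge c_3(B)(k_j/R_j)\dist(z,Z(P))\ge c_4(B)\delta$; the two easy cases are also checked, namely that $|1-(z/R_j)^{k_j}|$ is bounded below when $|z|$ is not close to $R_j$, and that separation forces the relevant argument away from $2\pi\mathbb Z$ when it is. Since moreover $\ell_j(z)\le C(B)\tau(y)$ and $k_j\tau(y)\le B$ give $k_j\ell_j(z)\le C(B)$ for $|j-j_0|<L_0$, this yields $g_j(z)\ge\log\delta-C(B)$. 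Adding up the central terms and the tail produces $\bigl|\sum_j g_j(z)\bigr|\le D(\delta,B)$, with $D$ independent of $N$.

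The hard part is this lower bound on the central factors: converting the two‑dimensional separation $\dist(z,Z(P))>\delta e^y\tau(y)$ into a scalar estimate $|1-(z/R_j)^{k_j}|\gtrsim\delta$ of the correct order, uniformly in $j$. For the linearization of $1-(z/R_j)^{k_j}$ at its zeros to be valid on a disc of radius comparable to the zero spacing $R_j/k_j$ (which is of order $e^y\tau(y)$, hence contains the distance $\delta e^y\tau(y)$) one needs $k_j$ large; this is exactly where the restriction $y>y(\delta,\psi,B)$ enters, since $\psi''\to+\infty$ forces $\tau(y)\to0$ and therefore $k_j\ge(B\tau(y))^{-1}\to\infty$. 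The geometric‑decay step is routine but essential for the uniformity in $N$: it reduces the whole family to boundedly many indices around $j_0$, and one must trace that its rate — and hence $D$ — depends only on $\delta$ and $B$.
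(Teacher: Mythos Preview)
Your proposal is correct and follows the same strategy as the paper. The paper's proof consists of just the three elementary estimates
\[
\bigl|\log|1-z^k|\bigr|\le Ce^{-t}\ \ (|z|=1-t/k,\ t\ge1),\quad
\bigl|\log|1-z^k|\bigr|\le D_0(\delta)\ \ (1-\tfrac1k<|z|<1+\tfrac1k,\ \dist(z,\Lambda_k)>\delta/k),\quad
\bigl|\log|1-z^k|-k\log|z|\bigr|\le Ce^{-c\min(t,k)}\ \ (|z|=1+t/k,\ t\ge1),
\]
and leaves the summation to the reader; your decomposition $\log|P|-v=\sum_j g_j$ with $g_j=\log|1-(z/R_j)^{k_j}|-k_j\ell_j$ is exactly the symmetrized form encoded by the first and third of these, your geometric tail bound $|u_j|\le e^{-c(B)|j-j_0|}$ is their quantitative content, and your treatment of the central factors via linearization and the angular separation argument is precisely the middle estimate. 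So the two arguments coincide; yours simply makes explicit what the paper compresses into three displayed lines.
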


\begin{proof} It suffices to use that
\begin{align*}
&\bigl|\log|1-z^k|\bigr|\le Ce^{-t}, \qquad |z|=1-\frac tk,\,\, t\ge 1,\\
&\bigl|\log|1-z^k|\bigr|\le D_0(\delta), \qquad 1-\frac1k<|z|<1+\frac1k,\quad \dist(z,\Lambda_k)>\frac\delta{k},\\
&\bigl|\log|1-z^k|-k\log|z|\bigr|\le Ce^{-c\min(t,k)}, \qquad |z|=1+\frac tk,\,\, t\ge 1,
\end{align*}
where $\Lambda_k=\{e^{2\pi ij/k}\}_{0\le j<k}$.
\end{proof}

Given $A,R>0$, set 
$$
\Omega_{R,A}=\{z\in\CC:\bigl||z|-R\bigr|\le A\rho(R)\}.
$$

\begin{lemma}
\label{l3}
Given $A<\infty$, there exist $R>0$ and a polynomial $Q$ such that $Z(Q)\setminus\{0\}\subset \Omega_{R,2A}$ and 
\begin{gather*}
\frac 1M\le \frac{|Q(z)|\rho(R)e^{-h(z)}}{\dist(z,Z(Q))}\le M,\qquad z\in \Omega_{R,A},\\
|Q(z)|\le Me^{h(z)},\qquad z\in\CC,\\
\dist(w,Z(Q)\setminus\{w\})\ge \frac{\rho(R)}{M},\qquad w\in Z(Q)\setminus\{0\},\\
\dist(z,Z(Q)\setminus\{w\})\le M\rho(R),\qquad z\in \Omega_{R,A},
\end{gather*}
for some absolute constant $M$.
\end{lemma}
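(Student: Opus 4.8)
The plan is to build $Q$ from the building blocks of Lemmas~\ref{l1} and~\ref{l2}, taking $v$ to be a piecewise-linear approximation (in the logarithmic scale) of $\psi=h\circ\exp$ on a large annulus. First I would fix $A$ and apply Lemma~\ref{l1} with a large parameter to obtain $y\ge y_0$ on which $\psi''$ is essentially constant, equal to $\psi''(y)=\tau(y)^{-2}$, on the logarithmic interval $|x-y|\le A_0\tau(y)$ for some $A_0\gg A$; then $\rho(R)=e^y\tau(y)$ for $R=e^y$, and on the real interval $||z|-R|\le A\rho(R)$ the second derivative $\psi''$ is comparable to $\tau(y)^{-2}$. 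On this interval $\psi$ is, up to an additive linear term and an $O(1)$ error, the quadratic $\frac12\psi''(y)(t-y)^2$. I would choose the radii $R_j$ spaced by $\asymp e^y\tau(y)=\rho(R)$ across $\Omega_{R,A}$ (so $N\asymp A$), and choose the multiplicities $k_j\asymp\tau(y)^{-1}$ so that the slope increments $\sum_{s\le j}k_s/t$ of $v$ match the slope $\psi'(t)$ of $\psi$ at the matching nodes; concretely, $k_j$ is chosen so that $k_j/R_j$ approximates $\psi'(\log R_{j})-\psi'(\log R_{j-1})\asymp \psi''(y)\,( \log R_j-\log R_{j-1})\asymp 1$, which forces $k_j\tau(y)\asymp 1$, as required by Lemma~\ref{l2}. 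Outside $\Omega_{R,A}$ I would truncate: to the left of $R_1=e^y$ the function $v$ is $0$ and $Q$ is bounded by $1$ there by the Lemma~\ref{l2} estimate $|\log|1-z^k||\le Ce^{-t}$ for $|z|<1-t/k$; to the right I would add one more factor with a single huge multiplicity $k_N\asymp\tau(y)^{-1}$ at $R_N$ producing a linear (in $\log t$) growth $v(t)=k_N(\log t-\log R_N)$ that dominates $\psi(\log t)$ for $t>R_N$ because $\psi''$ is increasing, hence $\psi$ is below its tangent-extension from $R_N$ with slope $\psi'(\log R_N)<k_N/R_N$; this secures the global bound $|Q(z)|\le Me^{h(z)}$.

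With $v$ so chosen, Lemma~\ref{l2} gives $|\log|Q(z)|-v(|z|)|\le D$ whenever $\dist(z,Z(Q))>\delta\rho(R)$, and by construction $|v(|z|)-h(z)|\le D'$ on $\Omega_{R,A}$, so $\bigl|\log|Q(z)|-h(z)\bigr|\le D+D'$ there, away from the zeros. To upgrade this to the two-sided estimate with the factor $\dist(z,Z(Q))/\rho(R)$, I would argue separately near and far from $Z(Q)$. Far from $Z(Q)$ (at distance $\ge\delta\rho(R)$) the ratio $\dist(z,Z(Q))/\rho(R)$ is bounded above and below — the upper bound because $Z(Q)\cap\Omega_{R,2A}$ is $\asymp\rho(R)$-dense, which is exactly the last displayed inequality (and which follows from $k_j\tau(y)\asymp1$: the $k_j$-th roots on the circle of radius $R_j$ are spaced by $\asymp R_j/k_j\asymp e^y\tau(y)=\rho(R)$, and consecutive circles are $\asymp\rho(R)$ apart), the lower bound by the choice of $\delta$ — so the target estimate is just the already-proved $|\log|Q(z)|-h(z)|=O(1)$. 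Near a zero $w$ (within $\delta\rho(R)$), $Q(z)=(z-w)G(z)$ with $G$ having no zeros in a disk of radius $\asymp\rho(R)$ about $w$ (using the separation of $Z(Q)$, the third displayed inequality, which again comes from $k_j/R_j\asymp 1/\rho(R)$ together with the circles being $\rho(R)$-separated); applying the harmonic estimate for $\log|G|$ — or, more simply, applying Lemma~\ref{l2} to $G$, i.e. to $P$ with the factor at $w$ removed and $v$ correspondingly decreased by one unit of $\log|t/R_j|=O(\rho(R)/R)=o(1)$ — gives $\log|G(z)|=h(z)+O(1)$, hence $|Q(z)|e^{-h(z)}\asymp|z-w|=\dist(z,Z(Q))$, which is the claim. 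Finally, the absolute constant $M$: every comparability constant above depends only on the combinatorial data ($A$, $\delta$, and the spacing parameters), and by choosing those spacing parameters to be fixed numerical constants (and $A$ enters only through $N\asymp A$, which I would absorb by in fact proving the statement with a numerical $A$, or by noting $D(\delta,B)$ in Lemma~\ref{l2} is $N$-independent), one gets $M$ independent of $y$; this is the point where one must be careful.

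The main obstacle is the near-zero estimate together with the uniformity of $M$: one must verify that removing a single factor from $P$ changes $v$ by only $O(1)$ uniformly (this is where the spacing $R_{j+1}-R_j\asymp\rho(R)=o(R)$ is essential, so that $\log|t/R_j|$ stays bounded across the whole annulus), and that the resulting $G$ genuinely has a zero-free disk of radius $\asymp\rho(R)$ so that Lemma~\ref{l2}'s hypothesis $\dist(\cdot,Z(G))>\delta e^y\tau(y)$ can be met at $z$ close to $w$ — which needs the separation of $Z(Q)$, itself a consequence of $k_j\tau(y)\asymp1$ and the circle spacing. Everything else is bookkeeping: matching $v$ to $h$ on $\Omega_{R,A}$ is a Taylor expansion controlled by Lemma~\ref{l1}, and the global bound $|Q|\le Me^h$ outside $\Omega_{R,A}$ uses monotonicity of $\psi''$ on each side of $y$ exactly as in \cite{SW1} and \cite{bl}.
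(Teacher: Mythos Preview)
Your overall plan---use Lemma~\ref{l1} to locate a logarithmic interval on which $\psi''$ is essentially constant, then feed suitable $R_j,k_j$ into Lemma~\ref{l2}---is exactly the paper's. But there is a genuine gap: you take $Q$ to be essentially $P$ itself and then assert that $|v(|z|)-h(z)|\le D'$ on $\Omega_{R,A}$. This is false. The function $v$ of Lemma~\ref{l2} vanishes to the left of $R_1$ and on the whole annulus is of size $O(A^2)$ (slope $\sum_{s\le j}k_s\lesssim A\tau(y)^{-1}$ times logarithmic width $\lesssim A\tau(y)$), whereas $h(z)=\psi(\log|z|)\approx\psi(y)$, which tends to infinity with $y$. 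Your own description reveals the issue: matching the slope \emph{increments} $k_j\approx\psi'(\log R_j)-\psi'(\log R_{j-1})$ makes $v$ approximate $\psi$ minus its tangent line at $y$, not $\psi$ itself.

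The paper's fix is to set $Q(z)=e^{a}z^{k}P(z)$ with $k\in\mathbb N$ close to $\psi'(y)$ and $a\approx\psi(y)-ky$. Then $\log|Q(e^{t})|\approx a+kt+v(e^{t})$: the affine part $a+kt$ is the tangent to $\psi$ at $y$, and $v$ supplies the second-order correction, bounded on the annulus by Lemma~\ref{l1}, so that $a+kt+v(e^{t})=\psi(t)+O(1)$ for $e^{t}\in\Omega_{R,2A}$. The same affine term gives the global bound $|Q|\le Me^{h}$ on both sides of the annulus directly from convexity of $\psi$---note that a convex function lies \emph{above} its tangent lines, not below as you wrote. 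The extra zero at the origin is harmless since every assertion concerns $Z(Q)\setminus\{0\}$. With this correction in place, your treatment of the near-zero estimate (remove one linear factor, apply Lemma~\ref{l2} to the quotient) and of the separation and density of $Z(Q)\setminus\{0\}$ goes through as you describe.
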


\begin{proof} The lemma follows immediately from Lemmas~\ref{l1} and \ref{l2}. 
Indeed, by Lemma~\ref{l1} we can find $R$ such that $\psi''$ is almost constant on the corresponding interval around $\log R$. Then we find $k_j,R_j$ in Lemma~\ref{l2} and $a\in\mathbb R$, $k\in\mathbb N$ such that $a+kt+v(\exp t)$ is close to $\psi(t)$ 
for $\exp t\in \Omega_{R,2A}$. Finally, we set $Q(z)=e^az^kP(z)$, where $P$ is given by Lemma~\ref{l2}.
\end{proof}

For a similar construction see \cite{BDK}; for an alternative way to produce analytic functions $Q$ satisfying such conditions see the atomization procedure in \cite{LM}.\\

Now we can proceed like in \cite{bl}.

\begin{lemma} \label{l4}
In the conditions of Lemma~\ref{l3}, given $w\in\Omega_{R,A/2}$, there exists a function $\Phi_w$ analytic
in the disc $D_w=\{z\in\mathbb C:|z-w|<\rho(R)\}$ and such that
$$
\frac 1M\le |\Phi_w(z)|e^{-h(z)}\le M,\qquad z\in D_w,
$$
for some absolute constant $M$.
\end{lemma}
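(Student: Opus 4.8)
The plan is to build $\Phi_w$ as a local holomorphic antiderivative of $\log$ of the modulus, using the polynomial $Q$ from Lemma~\ref{l3} to control $h$ near $w$. First I would recall that $h$ is subharmonic with $\Delta h = \rho^{-2}$ (since $\psi$ is convex here), so on the disc $D_w$ of radius $\rho(R)$ the oscillation of $\Delta h$ is bounded (by Lemma~\ref{l1}, $\psi''$ is essentially constant on the relevant logarithmic interval, hence $\rho$ is essentially constant, namely comparable to $\rho(R)$, on a neighbourhood of $\Omega_{R,2A}$ containing $D_w$). Therefore the function $u(z) = h(z) - \log|Q(z)|$ is harmonic on a slightly larger disc: indeed $\Delta\log|Q| = 2\pi\sum_{\zeta\in Z(Q)}\delta_\zeta$, but all zeros of $Q\setminus\{0\}$ lie in $\Omega_{R,2A}$, and the point is that $h - \log|Q|$ has no singularities \emph{and} — here is where the estimates of Lemma~\ref{l3} enter — is \emph{bounded} on $D_w$. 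Concretely, the first inequality of Lemma~\ref{l3} gives $|Q(z)|e^{-h(z)} \asymp \dist(z,Z(Q))/\rho(R)$ on $\Omega_{R,A}$, so $\log|Q(z)| - h(z) = \log\bigl(\dist(z,Z(Q))/\rho(R)\bigr) + O(1)$, which is \emph{not} bounded (it blows down near zeros of $Q$). So a direct approach via $h-\log|Q|$ fails at the zeros; instead I would first divide those zeros out.

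The fix: among the zeros of $Q$ lying in $\Omega_{R,2A}$, only finitely many can be within distance, say, $3\rho(R)$ of $w$; by the separation estimate $\dist(w',Z(Q)\setminus\{w'\})\ge \rho(R)/M$ there are at most $O_M(1)$ of them. Call this finite set $\{\zeta_1,\dots,\zeta_m\}$ (with $m\le C_M$), and let $q(z)=\prod_{i=1}^m (z-\zeta_i)$, a polynomial with $|q|$ comparable to $\rho(R)^m$ times a bounded factor on $D_w$ away from... no — better, note $q$ has no zeros \emph{inside} $D_w$ if we shrink $D_w$; but the lemma insists on radius exactly $\rho(R)$, and $w\in\Omega_{R,A/2}$, so some $\zeta_i$ may genuinely lie in $D_w$. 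This is the real subtlety. The resolution is to write $Q = q\cdot \widetilde Q$ where $\widetilde Q$ has no zeros in the slightly enlarged disc $2D_w$, so $\log|\widetilde Q| - h + \log\bigl(\text{something}\bigr)$... Let me restate cleanly: set $u(z) = h(z) - \log|Q(z)| + \log|q(z)|$ on a disc $D'$ of radius $\tfrac32\rho(R)$ around $w$. Then $\Delta u = \rho^{-2} - 2\pi\sum_{\zeta\in Z(Q)}\delta_\zeta + 2\pi\sum_{i}\delta_{\zeta_i}$; inside $D'$ the $\delta$-masses from $Z(Q)$ are exactly the $\zeta_i$ (by the definition of $q$ and by choosing the radius so that no other zeros intrude — using $\dist(w,Z(Q)\setminus\{w\})$ comparable to $\rho(R)$, one picks the cutoff radius so that all zeros within $D'$ are among the $\zeta_i$), so they cancel and $\Delta u = \rho^{-2}$ on $D'$. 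Thus $u$ is real-analytic (in fact $C^\infty$) and bounded on $D_w\subset D'$: bounded above and below because $h-\log|Q|+\log|q| = \log\bigl(\dist(z,Z(Q))/(\rho(R)|q(z)|^{-1})\bigr)+O(1)$ and near each $\zeta_i$ the zero of $\dist(\cdot,Z(Q))$ is matched by the zero of $q$; away from the $\zeta_i$ both are bounded from $0$ and $\infty$ by multiples of $\rho(R)$ on $D_w$.

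Having a bounded function $u$ on $D_w$ with $\Delta u = \rho^{-2}\asymp \rho(R)^{-2}$, I would solve $\partial\bar\partial$: since $D_w$ is simply connected, write $u = 2\,\mathrm{Re}\,g + p$ where $g$ is holomorphic and $p$ is a fixed smooth real solution of $\Delta p = \rho^{-2}$ on $D_w$ with $\|p\|_{L^\infty(D_w)} = O(1)$ (a standard Newtonian-potential estimate on a disc of radius $\rho(R)$, since $\rho^{-2}\rho(R)^2\asymp 1$). Then $g$ is harmonic-conjugate-completed from the bounded harmonic function $\tfrac12(u-p)$, hence $|\mathrm{Re}\,g|=O(1)$ on $D_w$, and on the slightly smaller disc $\tfrac12 D_w$ Harnack/Borel--Carathéodory gives $|\mathrm{Im}\,g|=O(1)$ too — but to keep radius exactly $\rho(R)$ one works on $D'$ from the start and restricts. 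Finally set $\Phi_w(z) = q(z)\,e^{g(z)}\cdot(\text{unimodular constant})^{-1}\cdot\frac{Q(z)}{q(z)}$... cleaner: set $\Phi_w = e^{g}\cdot \frac{Q}{q}\cdot$ — no: we want $\log|\Phi_w| = h + O(1)$. We have $h = u + \log|Q| - \log|q| = 2\mathrm{Re}\,g + p + \log|Q/q|$, so $|Q(z)/q(z)|\,|e^{2g(z)}| = e^{h(z)-p(z)} \asymp e^{h(z)}$; thus $\Phi_w := (Q/q)\,e^{2g}$ is holomorphic on $D'$ (as $Q/q$ is holomorphic there, its only zeros being the $\zeta_i$ which... wait $Q/q$ still may have no zeros in $D'$ only if \emph{all} zeros of $Q$ in $D'$ are cancelled by $q$ — yes, that was the design) and satisfies $\tfrac1M e^{h}\le|\Phi_w|\le M e^{h}$ on $D_w$. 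The main obstacle, and the step deserving the most care, is exactly the bookkeeping in the middle paragraph: choosing the auxiliary radius and verifying that $q$ removes \emph{precisely} the zeros of $Q$ inside it (using the two distance estimates of Lemma~\ref{l3}), so that $u$ is genuinely smooth and bounded; everything after that is the routine $\bar\partial$-solution on a disc with a controlled potential.
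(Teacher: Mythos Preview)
Your construction works, but it is more elaborate than needed and contains one incorrect (though harmless) intermediate claim.

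The folklore argument the paper invokes does not use the polynomial $Q$ at all; it uses only that $\Delta h\asymp\rho(R)^{-2}$ on a disc $D'$ of radius $c\rho(R)$ around $w$ (which is exactly what Lemma~\ref{l1} guarantees, via the near-constancy of $\psi''$). On $D'$ write $h=H+v$ where $H$ is the Poisson extension of $h|_{\partial D'}$ and $v$ is the Green potential of $\Delta h|_{D'}$; since $\int_{D'}G_{D'}(z,\zeta)\,dm(\zeta)\le C\rho(R)^2$, one gets $|v|\le C$. Then $H=\mathrm{Re}\,G$ for some holomorphic $G$ on the simply connected $D'$, and $\Phi_w=e^G$ does the job. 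Your $(Q/q)e^{2g}$ is this $e^G$ in disguise: your $p$ plays the role of $v$, and your identity $\log|\Phi_w|=h-p$ is the same as $\log|e^G|=h-v$. So the two routes coincide at the end, but the standard one avoids all bookkeeping with the zeros of $Q$.

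One correction: your claim that $u=h-\log|Q/q|$ is bounded on $D_w$ is wrong. From the first estimate of Lemma~\ref{l3} together with the separation of zeros one computes $u=m\log\rho(R)+O(1)$, which is not uniformly bounded in $R$. Consequently your assertion $|\mathrm{Re}\,g|=O(1)$ (and the subsequent Borel--Carath\'eodory step) is also wrong. But you never need it: the identity $\log|\Phi_w|-h=-p$ with $p$ bounded holds regardless of the size of $g$, so simply delete those sentences and the argument stands. (Incidentally, the same computation shows you could have taken $\Phi_w=(Q/q)\,\rho(R)^m$ directly and skipped the $\bar\partial$ step entirely.)
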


\begin{proof} The proof of this folklore result is identical to that of Lemma~2.1 in \cite{bl}.
\end{proof}

\begin{lemma} \label{l5} 
In the conditions of Lemma~\ref{l3}, 
$$
\frac 1M\le \|{\bf k }_z\| e^{-h(z)/2}\rho(R)\le M, 
\qquad z \in \Omega_{R,A/3},
$$
for some absolute constant $M$.
\end{lemma}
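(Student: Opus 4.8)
The plan is to read off both estimates from the local model function supplied by Lemma~\ref{l4}. Since $\Omega_{R,A/3}\subset\Omega_{R,A/2}$, for $z\in\Omega_{R,A/3}$ we may take $w=z$ in Lemma~\ref{l4}; the resulting $\Phi_z$ is zero-free on the disc $D_z=\{|\zeta-z|<\rho(R)\}$ (because $|\Phi_z|\ge e^h/M$ there), so the analytic square root $\Psi_z:=\Phi_z^{1/2}$ is well defined on $D_z$ and satisfies $M^{-1/2}\le|\Psi_z(\zeta)|e^{-h(\zeta)/2}\le M^{1/2}$ on $D_z$. Since $R$ was chosen through Lemma~\ref{l1}, the quantities $\rho(\zeta)$ and $\Delta h(\zeta)$ are comparable to $\rho(R)$ and $\rho(R)^{-2}$ respectively on $D_z$; this uniformity is what makes the local analysis work. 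Note that one must \emph{not} try to replace $e^{-h(\zeta)}$ by $e^{-h(z)}$ on $D_z$: the oscillation of $h$ on $D_z$ is of order $\psi'(\log R)/\sqrt{\psi''(\log R)}$ and is in general unbounded — passing through $\Psi_z$ (equivalently, through the subharmonicity of $\log|f|^2-h$ up to an additive $O(\Delta h\cdot\rho(R)^2)=O(1)$) is precisely what circumvents this.

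For the upper bound, let $f\in\mathcal F_h$. The function $f/\Psi_z$ is analytic on $D_z$, so the sub-mean value inequality on the disc of radius $\rho(R)/2$ gives $|(f/\Psi_z)(z)|^2\le\frac{4}{\pi\rho(R)^2}\int_{D_z}|f/\Psi_z|^2\,dm$. Using $|\Psi_z|^2\asymp e^{h}$ on $D_z$, the right-hand side is $\lesssim\rho(R)^{-2}\int_{D_z}|f|^2e^{-h}\,dm\le\rho(R)^{-2}\|f\|^2$, while the left-hand side is $\asymp|f(z)|^2e^{-h(z)}$. Taking the supremum over $\|f\|_h=1$ yields $\|{\bf k}_z\|^2\lesssim e^{h(z)}\rho(R)^{-2}$, which is the right inequality of the lemma.

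For the lower bound it suffices to exhibit, for each $z\in\Omega_{R,A/3}$, an entire function $F$ with $F(z)=\Psi_z(z)$ (hence $|F(z)|\asymp e^{h(z)/2}$) and $\|F\|^2\lesssim\rho(R)^2$, since then $\|{\bf k}_z\|\ge|F(z)|/\|F\|\gtrsim e^{h(z)/2}/\rho(R)$. One takes $F=\chi\Psi_z-G$, where $\chi$ is a smooth cut-off with $\chi\equiv1$ on $\tfrac12 D_z$, $\mathrm{supp}\,\chi\subset D_z$, $|\bar\partial\chi|\lesssim\rho(R)^{-1}$, and $G$ solves $\bar\partial G=\Psi_z\bar\partial\chi$. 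Solving by Hörmander's $L^2$-theorem with the weight $h(\zeta)+2\log|\zeta-z|$ — which is subharmonic and has Laplacian $\asymp\rho(R)^{-2}$ on the support of $\Psi_z\bar\partial\chi$ (the annulus $\tfrac12\rho(R)<|\zeta-z|<\rho(R)$, where $|\Psi_z|^2\asymp e^h$) — one obtains $\int_\CC|G|^2e^{-h}|\zeta-z|^{-2}\,dm\lesssim\rho(R)^2\int|\Psi_z\bar\partial\chi|^2e^{-h}|\zeta-z|^{-2}\,dm\lesssim1$. Local integrability near $z$ forces $G(z)=0$, and the same bound gives $\|G\|^2=\int|G|^2e^{-h}\,dm\lesssim\rho(R)^2$ (the contribution of $\{|\zeta-z|>\rho(R)\}$, where $G$ is analytic and equals $-F$, being absorbed by the rapid decay of $e^{-h}$); together with $\|\chi\Psi_z\|^2\lesssim\int_{D_z}e^he^{-h}\,dm=\pi\rho(R)^2$ this gives $\|F\|^2\le2\|\chi\Psi_z\|^2+2\|G\|^2\lesssim\rho(R)^2$, as required. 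As usual, $h$ may be taken $C^2$ and subharmonic on all of $\CC$ after a harmless modification on a compact set, which affects neither $\mathcal F_h$ up to equivalence of norms nor anything in $\Omega_{R,A}$.

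The main obstacle is exactly the lower bound: one must correct the localized model function $\chi\Psi_z$ to an entire function without spoiling its value at the centre $z$ and without inflating its $\mathcal F_h$-norm. Inserting the logarithmic singularity $2\log|\zeta-z|$ in the Hörmander weight does both at once — it pins the correction to vanish at $z$ while keeping $\|G\|$ of order $\rho(R)$. Everything else (the sub-mean value computations, the choice of $\chi$, and the bookkeeping with $\rho$ and $\Delta h$) is routine, the needed regularity being furnished by Lemma~\ref{l1}. Alternatively, both inequalities follow from the explicit expansion $\|{\bf k}_z\|^2=\sum_{n\ge0}|z|^{2n}/c_n$ with $c_n=2\pi\int_0^\infty r^{2n+1}e^{-h(r)}\,dr$, by Laplace's method for $c_n$ near the saddle point $r_n$ (where $\psi'(\log r_n)=2n+1$): for $r_n\asymp|z|$ the integrand is regular by Lemma~\ref{l1}, and the terms with $r_n$ far from $|z|$ are exponentially negligible.
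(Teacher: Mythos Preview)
Your upper bound is correct and matches the paper's approach (inherited from \cite{bl}): divide by a branch of $\Phi_z^{1/2}$ and apply the sub-mean-value inequality on $D_z$. For the lower bound the routes diverge. The paper's one-line proof points to Lemma~2.3 of \cite{bl} with the entire function $F$ there replaced by the polynomial $Q$ of Lemma~\ref{l3}; that argument manufactures an \emph{explicit} entire test function out of $Q$ and a nearby zero, so no $\bar\partial$-equation is needed. Your H\"ormander construction is a legitimate alternative that bypasses Lemma~\ref{l3} entirely, using only the local model $\Phi_z$ of Lemma~\ref{l4} together with the regularity of $\Delta h$ supplied by Lemma~\ref{l1}. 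The Laplace-method alternative you sketch at the end is also sound and is precisely what the paper does for the analogous Lemma~\ref{l2q} in Section~\ref{pt2}.

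There is, however, a genuine gap in the H\"ormander step as written. With the weight $h(\zeta)+2\log|\zeta-z|$ the estimate you obtain is
\[
\int_{\CC}|G|^{2}e^{-h}\,|\zeta-z|^{-2}\,dm\ \lesssim\ 1,
\]
and this does \emph{not} control $\int_{|\zeta-z|>\rho(R)}|G|^{2}e^{-h}\,dm$, since $|\zeta-z|^{-2}$ decays at infinity. Your parenthetical claim that this tail ``is absorbed by the rapid decay of $e^{-h}$'' is circular: on $\{|\zeta-z|>\rho(R)\}$ one has $G=-F$, and $\|F\|_h$ is exactly the quantity to be bounded. The standard repair is to cap the logarithmic term---replace $2\log|\zeta-z|$ by a function equal to it on a disc about $z$ and constant outside a slightly larger disc---so that the auxiliary weight agrees with $h$ up to an additive constant near infinity while still carrying the point mass at $z$ that forces $G(z)=0$; the negative curvature introduced by the cap has total mass $4\pi$ and can be spread over an annulus wide enough (after enlarging $A$ in Lemmas~\ref{l3}--\ref{l4}) that it is dominated by $\Delta h\asymp\rho(R)^{-2}$. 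Equivalently, one may invoke a one-point Ohsawa--Takegoshi extension. With that adjustment your argument goes through with absolute constants.
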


\begin{proof} Analogous to that of Lemma~2.3 in \cite{bl} with $F$ there replaced by our polynomial $Q$.
\end{proof}

\begin{lemma} \label{l6} In the conditions of Lemma~\ref{l3}, 
let a sequence  $\Lambda\subset \CC$ be such that 
$\{\kl \}_{\lambda \in \Lambda}$  is a Riesz basis in $\mathcal F_h$.
Then   
\begin{itemize}
\item[(a)] $\dist(\lambda,\Lambda\setminus
\{\lambda\})\ge \beta\rho(R), \quad \lambda \in \Lambda\cap \Omega_{R,A/4}$,
\item[(b)] $\dist(z,\Lambda)\le \rho(R)/\beta, \quad z\in\Omega_{R,A/4}$,
\end{itemize}
for some $\beta=\beta(\Lambda)$ and for $A>A(\Lambda)$.
\end{lemma}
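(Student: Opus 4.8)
The plan is to use the Riesz basis property in the form described in Section~\ref{ss11}: the synthesis operator $\{a_\lambda\}\mapsto \sum a_\lambda\kl$ is an isomorphism onto $\mathcal F_h$, so in particular both it and its inverse are bounded. Part (a), $\rho$-separation, is the easier half. Suppose $\lambda,\lambda'\in\Lambda\cap\Omega_{R,A/4}$ with $|\lambda-\lambda'|$ small compared to $\rho(R)$. Using Lemma~\ref{l4}, produce a function $\Phi$ behaving like $e^{h}$ on the disc $D_{\lambda}$, and test the lower Riesz inequality against the pair of vectors $\kl$, $\Bbbk_{\lambda'}$: the point is that $|\langle\kl,\Bbbk_{\lambda'}\rangle|$ is close to $1$ when $\lambda,\lambda'$ are close. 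Indeed, $\langle\kl,\Bbbk_{\lambda'}\rangle = {\bf k}_{\lambda'}(\lambda)/(\|{\bf k}_\lambda\|\,\|{\bf k}_{\lambda'}\|)$, and by Lemma~\ref{l5} the denominator is $\asymp e^{h(\lambda)/2}e^{h(\lambda')/2}\rho(R)^{-2}$ while $|{\bf k}_{\lambda'}(\lambda)|$ can be bounded below by reproducing a suitable test function (e.g. $\Phi$ times a cutoff, or a rescaled bump), giving $|{\bf k}_{\lambda'}(\lambda)|\gtrsim e^{h(\lambda)}\rho(R)^{-2}$ when $|\lambda-\lambda'|\le c\rho(R)$. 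Then the Gram matrix of $\{\kl,\Bbbk_{\lambda'}\}$ is nearly singular, contradicting the lower Riesz bound. This forces $|\lambda-\lambda'|\ge\beta\rho(R)$.

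For part (b), $\rho$-density, I would argue by contradiction: suppose there is $z_0\in\Omega_{R,A/4}$ with $\dist(z_0,\Lambda)>\rho(R)/\beta$ for $\beta$ small. The idea is that then $\kl$ for $\lambda$ near $z_0$ (within, say, distance comparable to $A\rho(R)$) cannot reconstruct a function concentrated at $z_0$. Concretely, using Lemma~\ref{l3} and Lemma~\ref{l4} build a test function $g$ — essentially $Q$ divided by the factor vanishing near $z_0$, or a localized version of $\Phi_{z_0}$ — normalized so that $\|g\|_h\asymp 1$, $|g(z_0)|\asymp e^{h(z_0)/2}\rho(R)^{-1}\asymp\|{\bf k}_{z_0}\|$, and $g$ is small on a $\rho(R)/\beta$-neighborhood of the complement of that neighborhood of $z_0$. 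Write $g=\sum_{\lambda\in\Lambda}b_\lambda\kl$ with $\sum|b_\lambda|^2\asymp\|g\|_h^2\asymp 1$. Splitting the sum into $\lambda$ near $z_0$ (distance $\le L\rho(R)$) and $\lambda$ far, one uses the off-diagonal decay $|\langle\kl,\Bbbk_{z_0}\rangle|$ to control the far part: this decay follows from Lemma~\ref{l5} together with the pointwise bound $|{\bf k}_z(w)|\lesssim \|{\bf k}_z\|\,\|{\bf k}_w\| e^{-c\dist(z,w)/\rho(R)}$ (a standard consequence of the subharmonicity and the local estimates, or of the explicit $Q$). Since there are no $\lambda$ within $\rho(R)/\beta$ of $z_0$, evaluating $g/\|{\bf k}_{z_0}\|$ against $\Bbbk_{z_0}$ gives $\asymp 1$ on the left but $\sum_\lambda b_\lambda\langle\kl,\Bbbk_{z_0}\rangle$, which by Cauchy--Schwarz is at most $\bigl(\sum|b_\lambda|^2\bigr)^{1/2}\bigl(\sum_\lambda|\langle\kl,\Bbbk_{z_0}\rangle|^2\bigr)^{1/2}$, and the second factor is small when $\beta$ is small because all $\lambda$ are at distance $>\rho(R)/\beta$, making this a contradiction for $\beta$ small enough and $A$ large enough.

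Two points merit care. First, all the localization constructions must be uniform in $R$ (the constant $M$ in Lemmas~\ref{l3}--\ref{l5} is absolute, which is exactly what makes the argument work on the annuli $\Omega_{R,A}$ simultaneously); one must keep track that the cutoffs cost only an absolute constant in norm, which is where the slight shrinking of the annulus ($A/2\to A/3\to A/4$) is used. Second, and this is the main obstacle, is establishing the off-diagonal exponential decay of $\langle\kl,\Bbbk_{z}\rangle$ uniformly on $\Omega_{R,A}$: one needs $|{\bf k}_z(w)|e^{-h(z)/2-h(w)/2}$ to decay like $e^{-c|z-w|/\rho(R)}$ for $z,w$ in a fixed multiple of the annulus. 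This follows by applying the reproducing property to ${\bf k}_z$ itself against a function of the form ${\bf k}_z\cdot(\text{polynomial-like multiplier vanishing to high order between } z \text{ and } w)/(\text{normalizer})$, using the $Q$ from Lemma~\ref{l3} to manufacture such multipliers with controlled growth $|Q|\le Me^{h}$; alternatively one invokes the fact (proved exactly as in \cite{bl}) that the $Q$-based model gives $\mathcal F_h$ the structure needed for such kernel estimates. Granting this decay, both (a) and (b) follow as sketched, with $\beta=\beta(\Lambda)$ depending on the Riesz basis constants $c,C$ and $A(\Lambda)$ chosen large relative to $1/\beta$ and the decay rate. This is precisely the scheme of the proof of the analogous density/separation lemma in \cite{bl}, to which we refer for the routine details.
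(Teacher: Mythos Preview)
Your sketch is correct and aligns with the paper's own proof, which consists of nothing more than a reference to Lemma~2.4 of \cite{bl}; the separation argument (a) via the near-singularity of the Gram matrix of two close normalized kernels and the density argument (b) via a localized test function together with the lower frame (sampling) inequality is precisely the scheme carried out there. One small remark on your write-up of (b): the auxiliary function $g$ is redundant once you have the off-diagonal estimate, since the contradiction comes directly from comparing the lower frame bound $\sum_\lambda|\langle\Bbbk_{z_0},\Bbbk_\lambda\rangle|^2\ge c$ with the smallness forced by the gap; the contribution of $\lambda\notin\Omega_{R,A/3}$ (where Lemma~\ref{l5} is unavailable) is handled by choosing $A$ large, so that such $\lambda$ are already at distance $\gtrsim A\rho(R)$ from $z_0$, together with the global bound $|Q|\le Me^{h}$---this is the reason for the hypothesis $A>A(\Lambda)$.
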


\begin{proof} Analogous to that of Lemma~2.4 in \cite{bl}. 
\end{proof}

\begin{proof}[Proof of Theorem~\ref{t1}] We follow the proof of Theorem~2.5 in \cite{bl}. 
Suppose that the system 
$\{{\Bbbk}_\lambda \}_{\lambda \in \Lambda}$  is  a Riesz basis
in $\mathcal F_h$. Fix large $A$, choose $R$ and $Q$ as in Lemma~\ref{l3}, and define $E$ as in Subsection~\ref{ss11}. 
Lemma~\rm\ref{l5} implies  
that
$$
\Bigl\| \frac{E}{\cdot-\lambda} \Bigr\|^2
\asymp\frac{|E'(\lambda)|^2}{\|{\bf k }_z\|^2}
\asymp|E'(\lambda)|^2\rho^2(\lambda)e^{-h(\lambda)}, \qquad \lambda \in \Lambda\cap \Omega_{R,A/4}. 
$$
Consider the function $E/[(\cdot-\lambda)\Phi_\lambda]$. Applying  the mean value property 
we obtain that 
$$
\int_{\CC}\frac{|E(z)|^2}{|z-\lambda|^2}e^{-h(z)}dm(z)
\le \frac C{\rho^2(R)}
\int_{|z-\lambda|<\rho(R)}
|E(z)|^2e^{-h(z)}dm(z).
$$
By Lemma~\ref{l6}~(a), we have
\begin{gather*}
\int_{\CC}\Bigl(\sum_{|\lambda-R|<N\rho(R), \, \lambda \in \Lambda}
\frac{1}{|z-\lambda|^2}\Bigr)|E(z)|^2e^{-h(z)}dm(z)\\
\le \frac C{\rho^2(R)}\int_{|R-z|<(N+1)\rho(R)}|E(z)|^2e^{-h(z)}dm(z),
\end{gather*} 
with $A/4-1<N\le A/4$, and, as a result,
\begin{equation}
\inf_{z:|z-R|<(N+1)\rho(R)}
\Bigl[\rho^{2}(R)\Bigl(\sum_
{|\lambda-R|< N\rho(R),\, \lambda \in \Lambda}
\frac{1}{|z-\lambda|^2}\Bigr)\Bigr]\le C.
\label{06}
\end{equation}

Finally, by Lemma~\ref{l6}~(b),
$$
\rho^{2}(R)\Bigl(\sum_{|\lambda-R|< N\rho(R),\, \lambda \in \Lambda}
\frac{1}{|z-\lambda|^2}\Bigr)\ge 
     C\int_{\rho(R)<|\zeta|<N\rho(R)}
  \frac{dm(\zeta)}{|z-\zeta|^2}.
$$
For large $A$ (and hence, large $N$) we get a contradiction to \eqref{06}.
\end{proof}

\section{Proof of Theorem~\ref{t2}}
\label{pt2}

The situation for $h$ of slow growth is quite different from that considered in Section~\ref{pt1}. In particular, along some special (lacunary) sequence of points 
$\{\lambda_n\}_{n\ge 0}$ we obtain here that $\|{\bf k}_{\lambda_n}\|^2\asymp e^{h(\lambda_n)}/(\lambda_n\rho(\lambda_n))\gtrsim 
e^{h(\lambda_n)}/(\rho(\lambda_n))^2$ instead of $\|{\bf k}_{\lambda_n}\|^2\asymp   
e^{h(\lambda_n)}/(\rho(\lambda_n))^2$ in Lemma~\ref{l5}. To prove the boundedness of the operator $R_\Lambda$ introduced in Subsection~\ref{ss11} we use the results of \cite{BMS}.

For $n\ge \psi'(0)/2-1$ we choose $y_n$ such that $\psi'(y_n)=2n+2$ and set $\lambda_n=\exp y_n$. Define 
$$
g(t)=\psi(y_n+t)-(y_n+t)\psi'(y_n),\qquad -y_n\le t<\infty. 
$$
Set $\alpha_n=\psi''(y_n)=g''(0)$. Then $g'(0)=0$, $0<g''(t)\le \psi''(0)$, $|g'''(t)|=O(g''(t)^{5/3})$, and hence,
\begin{align}
g(t)-g(0)&\asymp \alpha_nt^2,\qquad |t|\le C\alpha_n^{-2/3}\label{e31},\\
g'(t)&\asymp \alpha_nt,\qquad |t|\le C\alpha_n^{-2/3}\label{e3star},\\
g(t)-g(0)&\ge C\alpha_n^{-1/3}+C\alpha_n^{1/3}|t|,\qquad |t|\ge C\alpha_n^{-2/3}\label{e32}.
\end{align}
We assign to the finite number of $y_n$ not yet defined, arbitrary values in such a way that the sequence $\{y_n\}_{n\ge 0}$ 
of positive numbers is strictly increasing.

\begin{lemma}
\label{l1q}
$$
\|z^n\|^2\asymp e^{(2n+2)y_n-\psi(y_n)}(\psi''(y_n))^{-1/2},\qquad n\ge 0.
$$
\end{lemma}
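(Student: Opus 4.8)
The plan is a one-dimensional saddle-point estimate. First I would pass to polar coordinates and substitute $r=e^{t}$, which turns $\|z^{n}\|^{2}=2\pi\int_{0}^{\infty}r^{2n+1}e^{-h(r)}\,dr$ into
$$
\|z^{n}\|^{2}=2\pi\int_{\mathbb R}e^{(2n+2)t-\psi(t)}\,dt ,
$$
where $\psi(t)=h(e^{t})$ is a $C^{3}$ function on all of $\mathbb R$; the integral converges because $\psi(t)\to h(0)$ as $t\to-\infty$ while $\psi'(t)\to\infty$ forces $\psi$ to grow superlinearly as $t\to+\infty$. The exponent $(2n+2)t-\psi(t)$ has derivative $(2n+2)-\psi'(t)$, which, $\psi'$ being strictly increasing, vanishes exactly at $t=y_{n}$ and changes sign there from $+$ to $-$; thus $y_{n}$ is its unique maximum point, the maximal value equals $(2n+2)y_{n}-\psi(y_{n})$, and the second derivative at $y_{n}$ equals $-\alpha_{n}$. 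Substituting $t=y_{n}+s$ and recalling $g(s)-g(0)=\psi(y_{n}+s)-\psi(y_{n})-(2n+2)s$ (here $\psi'(y_{n})=2n+2$), one gets $(2n+2)(y_{n}+s)-\psi(y_{n}+s)=(2n+2)y_{n}-\psi(y_{n})-(g(s)-g(0))$, hence
$$
\|z^{n}\|^{2}=2\pi\, e^{(2n+2)y_{n}-\psi(y_{n})}\int_{\mathbb R}e^{-(g(s)-g(0))}\,ds .
$$
Everything thus reduces to proving $J_{n}:=\int_{\mathbb R}e^{-(g(s)-g(0))}\,ds\asymp\alpha_{n}^{-1/2}$.

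To establish this I would use the already recorded estimates \eqref{e31} and \eqref{e32} together with the uniform bound $\alpha_{n}=\psi''(y_{n})\le\psi''(0)$, which makes $\alpha_{n}^{-1/2}\lesssim\alpha_{n}^{-2/3}$. For the lower bound, I would restrict the integral to $|s|\le c_{0}\alpha_{n}^{-1/2}$ with $c_{0}$ small enough that this interval lies inside $\{|s|\le C\alpha_{n}^{-2/3}\}$; there \eqref{e31} gives $g(s)-g(0)\le C_{1}\alpha_{n}s^{2}\le C_{1}c_{0}^{2}$, so the integrand is bounded below and $J_{n}\gtrsim\alpha_{n}^{-1/2}$. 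For the upper bound I would split at $|s|=C\alpha_{n}^{-2/3}$: on $\{|s|\le C\alpha_{n}^{-2/3}\}$, \eqref{e31} gives $g(s)-g(0)\gtrsim\alpha_{n}s^{2}$, so this part is $\le\int_{\mathbb R}e^{-c\alpha_{n}s^{2}}\,ds\asymp\alpha_{n}^{-1/2}$; on $\{|s|>C\alpha_{n}^{-2/3}\}$, \eqref{e32} gives $g(s)-g(0)\ge C\alpha_{n}^{-1/3}+C\alpha_{n}^{1/3}|s|$, so this part is $\le e^{-C\alpha_{n}^{-1/3}}\int_{\mathbb R}e^{-C\alpha_{n}^{1/3}|s|}\,ds\asymp\alpha_{n}^{-1/3}e^{-C\alpha_{n}^{-1/3}}\lesssim\alpha_{n}^{-1/2}$, because $\alpha_{n}^{1/6}e^{-C\alpha_{n}^{-1/3}}\le(\psi''(0))^{1/6}$. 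Combining these bounds yields $J_{n}\asymp\alpha_{n}^{-1/2}=(\psi''(y_{n}))^{-1/2}$, and the second display above gives the claim.

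Finally, the equality $\psi'(y_{n})=2n+2$ determines $y_{n}$ only for $n\ge\psi'(0)/2-1$; these are precisely the indices for which the argument above runs, and for them $n\to\infty$ forces $y_{n}\to\infty$, so the constants in \eqref{e31}--\eqref{e32} may be chosen uniform. For the remaining finitely many $n$ both sides of the asserted equivalence are positive finite numbers, so it holds trivially after enlarging the constant. The only even mildly delicate point is keeping all estimates uniform in $n$ and matching the two scales $\alpha_{n}^{-1/2}$ and $\alpha_{n}^{-2/3}$ in the tail splitting; the real work, the control of $g$ encapsulated in \eqref{e31}--\eqref{e32}, is already done, so the rest is the textbook Laplace method.
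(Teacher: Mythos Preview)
Your argument is correct and is exactly the Laplace-method computation the paper performs, using \eqref{e31} and \eqref{e32} to show $\int e^{-(g(s)-g(0))}\,ds\asymp\alpha_n^{-1/2}$. One small point: $g$ and the estimates \eqref{e31}--\eqref{e32} are only set up for $s\ge -y_n$, so the paper splits off $\int_0^1 r^{2n+1}e^{-h(r)}\,dr=O(1)$ rather than integrating in $t$ over all of $\mathbb R$; your version works as well once you observe that this $O(1)$ is dominated by the main term (indeed $(2n+2)y_n-\psi(y_n)\ge -\psi(0)$ by convexity of $\psi$, and $\alpha_n^{-1/2}\ge\psi''(0)^{-1/2}$).
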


\begin{proof} We have
$$
\|z^n\|^2=O(1)+2\pi\int_1^\infty t^{2n+1}e^{-\psi(\log t)}\, dt=O(1)+2\pi\int_0^\infty e^{(2n+2)s-\psi(s)}\, ds.
$$
By \eqref{e31} and \eqref{e32} we have 
$$
\frac{C_1}{\sqrt{\alpha_n}}\le \int_{-y_n}^\infty e^{g(0)-g(t)}\, dt\le \frac{C_2}{\sqrt{\alpha_n}}
$$
and hence, 
$$
\frac{C_1}{\sqrt{\alpha_n}}\le  \|z^n\|^2 e^{\psi(y_n)-(2n+2)y_n}\le   \frac{C_2}{\sqrt{\alpha_n}}.
$$
\end{proof}

Next, we estimate the norm of the reproducing kernel at the points $\lambda_n$; compare also to 
Lemma~\ref{l5}. 

\begin{lemma}
\label{l2q}
$$
\|{\bf k}_{\lambda_n}\|^2\asymp e^{\psi(y_n)-2y_n}(\psi''(y_n))^{1/2}=e^{h(\lambda_n)}/(\lambda_n\rho(\lambda_n)),\qquad n\ge 0.
$$
\end{lemma}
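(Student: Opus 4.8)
The plan is to estimate $\|{\bf k}_{\lambda_n}\|^2$ from above and below, using the monomial basis. Since $\mathcal F_h$ is a radial space, the monomials $\{z^k\}_{k\ge 0}$ form an orthogonal basis, so
$$
\|{\bf k}_{\lambda_n}\|^2=\sum_{k\ge 0}\frac{|\lambda_n|^{2k}}{\|z^k\|^2}=\sum_{k\ge 0}\frac{e^{2ky_n}}{\|z^k\|^2}.
$$
By Lemma~\ref{l1q}, $\|z^k\|^2\asymp e^{(2k+2)y_k-\psi(y_k)}(\psi''(y_k))^{-1/2}$, where $y_k$ is defined by $\psi'(y_k)=2k+2$. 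Thus the $k$-th term behaves like $e^{2ky_n-(2k+2)y_k+\psi(y_k)}(\psi''(y_k))^{1/2}$. The strategy is to show that this sum is dominated, up to a bounded factor, by the single term $k=n$, which by Lemma~\ref{l1q} gives $e^{-2y_n+\psi(y_n)}(\psi''(y_n))^{1/2}$, exactly the claimed quantity.

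First I would isolate the exponent. Writing $k=n+j$ and comparing $y_k$ to $y_n$: since $\psi'$ is increasing (as $\psi''>0$) and $\psi'(y_k)=2k+2$, the map $k\mapsto y_k$ is increasing, and $y_k-y_n=\int_{2n+2}^{2k+2}\frac{ds}{\psi''((\psi')^{-1}(s))}$. Using that $\psi''$ is non-increasing one can bound $y_{n+j}-y_n$ from below and above in terms of $j$ and $\psi''(y_n)=\alpha_n$; in fact $g$ from the text with $g'(0)=0$ encodes exactly this, since $g(t)-g(0)$ is (essentially) the exponent of the $k$-th term evaluated at $t=y_k-y_n$ when $k$ varies continuously. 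Concretely, the natural move is to compare the discrete sum to the integral $\int_0^\infty e^{(2n+2)s-\psi(s)}\,ds$ already estimated in the proof of Lemma~\ref{l1q}: one shows $\sum_k e^{2ky_n}/\|z^k\|^2\asymp \int e^{(2n+2)s-\psi(s)}\,ds\cdot(\text{correction})$, but more directly, since the terms are log-concave-like, the sum is comparable to $\int_0^\infty e^{2ny_n-\psi(s)+2s}\cdot(\text{density})\,ds$.

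Here is the cleaner route I would actually carry out. Put $u_k=e^{2ky_n}/\|z^k\|^2$; then $\log u_{k+1}-\log u_k=2y_n-\log(\|z^{k+1}\|^2/\|z^k\|^2)$. Using Lemma~\ref{l1q}, $\log(\|z^{k+1}\|^2/\|z^k\|^2)=2y_{k+1}+O(1)+O(\log(\psi''(y_{k+1})/\psi''(y_k)))$, and since $\psi''$ is non-increasing the log-ratio term is bounded by a quantity we control; the dominant behaviour is $2y_{k+1}-2y_n$, which is $\asymp\alpha_n^{-1}\cdot(k-n)$ for $|k-n|\le C\alpha_n^{1/3}$ and grows at least linearly with slope $\asymp\alpha_n^{1/3}$ beyond that, by \eqref{e31}--\eqref{e32} applied to $g$. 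Hence $\log u_k$ is a concave-type function of $k$ with maximum near $k=n$, the ratios $u_{k+1}/u_k$ cross $1$ near $k=n$, and summing a near-geometric sequence on each side gives $\sum_k u_k\asymp u_n\cdot(1+\sum_{j\ge 1}e^{-c\min(\alpha_n^{-1}j^2,\alpha_n^{1/3}j)})\asymp u_n\cdot\alpha_n^{-1/2}$ is NOT what we want — rather, one gets $\sum_k u_k\asymp u_n$ if the second derivative of $\log u_k$ in $k$ is bounded below in absolute value by a constant, and $\asymp u_n\alpha_n^{-1/2}$ times $\alpha_n^{1/2}$... let me be careful: the width of the peak in the $k$-variable is $\asymp\alpha_n^{-1/2}$ lattice points only if $\alpha_n$ is small, but $\alpha_n=\psi''(y_n)\to0$, so the peak is wide, width $\asymp\alpha_n^{-1/2}\gg1$, and the sum is $\asymp u_n\cdot\alpha_n^{-1/2}\cdot\sqrt{\alpha_n}$... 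The correct bookkeeping: $\log u_k\approx \log u_n - c\alpha_n(k-n)^2$ near the peak (from $g''\asymp\alpha_n$, since the $k$-step corresponds to a $t$-step of size $\asymp\alpha_n^{-1}$, giving $g$-increment $\asymp\alpha_n\cdot\alpha_n^{-2}=\alpha_n^{-1}$ per unit... ). I will sort this constant arithmetic out carefully in the writeup; the point is that the Gaussian-type sum over $k$ has effective width $w$ and height $u_n$ with $u_n\cdot w\asymp u_n\alpha_n^{-1/2}\cdot\alpha_n^{1/2}=u_n$, no — the honest computation is that $\sum u_k\asymp u_n\cdot(\text{number of }k\text{ within }O(1)\text{ of the max in }\log u)$, and since each unit step in $k$ changes the relevant $t$-variable by $\asymp 1/\alpha_n$, the Gaussian decay $e^{-c\alpha_n t^2}$ in $t$ becomes $e^{-c\alpha_n (k-n)^2/\alpha_n^2}=e^{-c(k-n)^2/\alpha_n}$ in $k$, so the width in $k$ is $\asymp\alpha_n^{1/2}\ll 1$ — hence effectively one term. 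That gives $\sum u_k\asymp u_n=e^{-2y_n+\psi(y_n)}(\psi''(y_n))^{1/2}$, as desired.

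The main obstacle, and where I expect to spend the real effort, is controlling the tail of the sum $\sum_{|k-n|>C\alpha_n^{1/3}}u_k$ uniformly in $n$: one must use the linear lower bound \eqref{e32} on $g(t)-g(0)$ together with a comparison between the discrete index $k$ and the continuous variable $t=y_k-y_n$ (which is not uniformly bi-Lipschitz since $\psi''\to0$, so a unit change in $k$ is a large change in $t$), and show the tail contribution is $o(u_n)$ or at worst $O(u_n)$. I would also need to rule out accidental large contributions from small $k$ (where $y_k$ can be far from $y_n$), which is handled by the global lower bound $g(t)-g(0)\ge C\alpha_n^{1/3}|t|$ for $|t|$ large, i.e. $\|z^k\|^2$ is large enough. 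Once these tail estimates are in place, the conclusion $\|{\bf k}_{\lambda_n}\|^2\asymp\|z^n\|^{-2}e^{2ny_n}\asymp e^{\psi(y_n)-2y_n}(\psi''(y_n))^{1/2}$ follows, and the identity $e^{\psi(y_n)-2y_n}(\psi''(y_n))^{1/2}=e^{h(\lambda_n)}/(\lambda_n\rho(\lambda_n))$ is immediate from $h(\lambda_n)=\psi(y_n)$, $\lambda_n=e^{y_n}$, and $\rho(\lambda_n)=e^{y_n}\tau(y_n)=e^{y_n}(\psi''(y_n))^{-1/2}$.
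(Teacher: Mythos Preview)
Your lower bound (the single term $k=n$) is exactly what the paper does. For the upper bound, however, the paper takes a different and cleaner route: instead of summing the reproducing-kernel series, it fixes an arbitrary $f\in\mathcal F_h$, introduces the integral means $F(t)=(\int_0^{2\pi}|f(te^{i\theta})|^2\,d\theta)^{1/2}$, uses Hardy's convexity theorem to know that $s\mapsto\log F(e^s)$ is convex, restricts the norm integral to $|s-y_n|\le\alpha_n^{-1/2}$ (where $g(s)-g(0)=O(1)$ by \eqref{e31}), replaces that interval by a fixed one $|s-y_n|\le\delta$ via log-convexity of the integrand, and finishes with the mean value property applied to $f(z)z^{-n-1}$. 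This yields $|f(\lambda_n)|^2\le C\|f\|^2 e^{\psi(y_n)-2y_n}\alpha_n^{1/2}$ directly, hence the upper bound on $\|{\bf k}_{\lambda_n}\|$.

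Your series approach can be made to work, and your heuristic that the peak in $k$ has width $\asymp\alpha_n^{1/2}\ll1$ (so essentially one term) is the right picture. But there is a genuine gap you have not addressed: when you invoke Lemma~\ref{l1q} term by term you pick up a factor $(\alpha_k/\alpha_n)^{1/2}$ in $u_k/u_n$. For $k>n$ this is $\le1$ and harmless, but for $k<n$ one has $\alpha_k\ge\alpha_n$ and the ratio can be as large as $(\alpha_0/\alpha_n)^{1/2}\to\infty$. Your Gaussian/linear tail bounds on $E_k-E_n$ do not automatically absorb this; one needs the hypothesis $|\psi'''|=O((\psi'')^{5/3})$, i.e.\ $(\psi'')^{-2/3}$ Lipschitz, to show that either $\alpha_k\asymp\alpha_n$ (when $y_n-y_k\lesssim\alpha_n^{-2/3}$) or else $E_n-E_k\ge c\alpha_n^{-1/3}$ is already large enough to kill the polynomial growth of $(\alpha_k/\alpha_n)^{1/2}$. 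You should make this split explicit. The paper's Hardy-convexity argument sidesteps this bookkeeping entirely, which is its main advantage.
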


\begin{proof} First of all, by Lemma~\ref{l1q}, we have
$$
\|{\bf k}_{\lambda_n}\|^2\ge \frac{\lambda_n^{2n}}{\|z^n\|^2}\ge C e^{\psi(y_n)-2y_n}(\psi''(y_n))^{1/2}.
$$

Let $f\in\mathcal F_h$,
\begin{gather*}
F(t)=\Bigl(\int_0^{2\pi}|f(te^{i\theta}|^2\,d\theta\Bigr)^{1/2},\\
\omega(s)=\log F(\exp s).
\end{gather*}
Hardy's convexity theorem \cite[Chapter 1]{Du} states that $\omega$ is convex. 

Next, by \eqref{e3star}, for large $n$ we have $y_n-\alpha_n^{-1/2}>y_{n-1}$ and 
\begin{gather*}
\|f\|^2=\int_0^\infty F(t)^2e^{-h(t)}t\,dt\ge \int_0^\infty e^{2\omega(s)-\psi(s)+2s}\,ds\\ \ge 
\int_{y_n-\alpha_n^{-1/2}}^{y_n+\alpha_n^{-1/2}}e^{2\omega(s)-\psi(s)+2s}\,ds\\ \ge 
Ce^{-g(0)}\int_{-\alpha_n^{-1/2}}^{\alpha_n^{-1/2}}e^{2\omega(y_n+s)-2n(y_n+s)}\,ds=I. 
\end{gather*}

If $\delta=\alpha_0^{-1/2}$, then by convexity of $\omega$ we have 
\begin{gather*}
I\ge Ce^{-g(0)}\alpha_n^{-1/2}\int_{-\delta}^{\delta}e^{2\omega(y_n+s)-2n(y_n+s)}\,ds\\ =
Ce^{-g(0)}\alpha_n^{-1/2}\int_{y_n-\delta}^{y_n+\delta}e^{2\omega(s)-2ns}\,ds\\ =
Ce^{-g(0)}\alpha_n^{-1/2}\int_{\exp(y_n-\delta)}^{\exp(y_n+\delta)}\frac{|f(z)|^2}{|z|^{2n+2}}\,dm(z).
\end{gather*}

Applying the mean value theorem to the function $f(z)z^{-n-1}$, we conclude that 
$$
|f(e^{y_n})|^2\le C\|f\|^2e^{g(0)}\alpha_n^{1/2}e^{2ny_n}=C\|f\|^2e^{\psi(y_n)-2y_n}\alpha_n^{1/2}.
$$ 
\end{proof}

Next, we consider a continuous piecewise linear function $\ell$ such that 
$$
\ell'(t)=2n+2,\qquad y_n<t<y_{n+1},\, n\ge 0.
$$
Then $\ell'(t)\le \psi'(t)$ for large $t$, and hence, $\ell(t)\le \psi(t)+O(1)$, $t\to\infty$. Next, $\ell'(t)+2\ge \psi'(t)$ for large $t$, and hence, 
$\ell(t)+2t\ge \psi(t)+O(1)$, $t\to\infty$.

\begin{lemma}
\label{l3q}
Given a small $\delta>0$ we have 
\begin{align*}
{\rm (a)}&\quad \int_0^\infty e^{\ell(t)-\psi(t)}\, dt<\infty,\\ 
{\rm (b)}&\quad \int_{y_n-\delta}^\infty e^{\ell(t)-\psi(t)}\, dt \le Ce^{\ell(y_n)-\psi(y_n)}(\psi''(y_n))^{-1/2},\\
{\rm (c)}&\quad \int_0^{y_{n}-\delta} e^{\ell(t)-\psi(t)+2t}\, dt \le Ce^{\ell(y_n)-\psi(y_n)+2y_n}(\psi''(y_n))^{-1/2},\\
{\rm (d)}&\quad \sum_{s=0}^n e^{\psi(y_s)-\ell(y_s)}(\psi''(y_s))^{1/2} \asymp e^{\psi(y_n)-\ell(y_n)}(\psi''(y_n))^{1/2},\\
{\rm (e)}&\quad \sum_{s=n}^\infty e^{\psi(y_s)-\ell(y_s)-2y_s}(\psi''(y_s))^{1/2} \asymp e^{\psi(y_n)-\ell(y_n)-2y_n}(\psi''(y_n))^{1/2}.
\end{align*}
\end{lemma}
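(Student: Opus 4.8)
Write $c_n=\psi(y_n)-\ell(y_n)$ and $\tau_n=(\psi''(y_n))^{-1/2}=\tau(y_n)$; since $\psi''$ is non-increasing and $\{y_n\}$ increasing, $\{\psi''(y_n)\}_n$ is non-increasing and $\{\tau_n\}_n$ non-decreasing. The plan is to reduce all five estimates to one pair of Laplace-type asymptotics on the intervals $[y_n,y_{n+1}]$ together with the fact that the sequences produced this way decay geometrically. On $[y_n,y_{n+1}]$ the line $\ell$ has slope $2n+2=\psi'(y_n)=\psi'(y_{n+1})-2$, so expanding $\psi$ around $y_n$, resp.\ around $y_{n+1}$, by means of the functions $g_n,g_{n+1}$ from \eqref{e31}--\eqref{e32}, the linear-in-$t$ parts cancel and
$$
\ell(t)-\psi(t)=-c_n-\bigl(g_n(t-y_n)-g_n(0)\bigr),\qquad
\ell(t)-\psi(t)+2t=2y_{n+1}-c_{n+1}-\bigl(g_{n+1}(t-y_{n+1})-g_{n+1}(0)\bigr).
$$
Hence $\int_{y_n}^{y_{n+1}}e^{\ell-\psi}=e^{-c_n}\int_0^{y_{n+1}-y_n}e^{-(g_n(s)-g_n(0))}\,ds$ and $\int_{y_n}^{y_{n+1}}e^{\ell-\psi+2t}=e^{2y_{n+1}-c_{n+1}}\int_0^{y_{n+1}-y_n}e^{-(g_{n+1}(-r)-g_{n+1}(0))}\,dr$. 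By \eqref{e31}--\eqref{e32}, exactly as in the proof of Lemma~\ref{l1q}, the inner integrals are $\lesssim\tau_n$, resp.\ $\lesssim\tau_{n+1}$, and are $\gtrsim\tau_n$, resp.\ $\gtrsim\tau_{n+1}$, as soon as $y_{n+1}-y_n$ exceeds a fixed multiple of $\tau_n$, resp.\ of $\tau_{n+1}$. The substitution $u=\psi'(t)$ gives $y_{n+1}-y_n=\int_0^2\psi''\bigl((\psi')^{-1}(2n+2+v)\bigr)^{-1}\,dv\ge 2/\psi''(y_n)=2\tau_n^2$, and the bound $\psi''\bigl((\psi')^{-1}(2n+4-r)\bigr)^{1/3}\le\psi''(y_{n+1})^{1/3}+Cr$ --- which is equivalent to $|\psi'''|=O((\psi'')^{5/3})$ --- gives $y_{n+1}-y_n\gtrsim\tau_{n+1}^{4/3}$. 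Since $\tau_n\to\infty$ when $\psi''\to0$ and all the quantities below are bounded when $\psi''\not\to0$, both length conditions hold for all large $n$, so
$$
\int_{y_n}^{y_{n+1}}e^{\ell(t)-\psi(t)}\,dt\asymp e^{-c_n}\tau_n,\qquad
\int_{y_n}^{y_{n+1}}e^{\ell(t)-\psi(t)+2t}\,dt\asymp e^{2y_{n+1}-c_{n+1}}\tau_{n+1},
$$
the finitely many exceptional $n$ being absorbed into the constants.

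Next I would bound the two consecutive-term ratios
$$
q_n=\frac{e^{-c_{n+1}}\tau_{n+1}}{e^{-c_n}\tau_n}=e^{-(c_{n+1}-c_n)}\frac{\tau_{n+1}}{\tau_n},\qquad
\widetilde q_n=\frac{e^{\,c_{n+1}-2y_{n+1}}\tau_{n+1}^{-1}}{e^{\,c_n-2y_n}\tau_n^{-1}}=e^{(c_{n+1}-c_n)-2(y_{n+1}-y_n)}\frac{\tau_n}{\tau_{n+1}},
$$
showing each is eventually $<1$. Writing $p(v)=\psi''\bigl((\psi')^{-1}(2n+2+v)\bigr)$, non-increasing with $p(0)=\psi''(y_n)$, $p(2)=\psi''(y_{n+1})$, the same substitution gives $c_{n+1}-c_n=\int_0^2 v/p(v)\,dv\ge 2/\psi''(y_n)$ and $2(y_{n+1}-y_n)-(c_{n+1}-c_n)=\int_0^2(2-v)/p(v)\,dv\ge 2/\psi''(y_n)$. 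Since $\tau_n\le\tau_{n+1}$, the second inequality already yields $\widetilde q_n\le e^{-2/\psi''(0)}<1$ for every $n$. For $q_n$ one needs $c_{n+1}-c_n\ge\frac12\log\bigl(\psi''(y_n)/\psi''(y_{n+1})\bigr)+c$: from $|\psi'''/(\psi'')^2|\le C(\psi'')^{-1/3}$ we get $\log\bigl(\psi''(y_n)/\psi''(y_{n+1})\bigr)\le C\int_0^2 p(v)^{-1/3}\,dv\lesssim\psi''(y_{n+1})^{-1/3}$, whereas the Lipschitz bound $p(2-r)^{1/3}\le\psi''(y_{n+1})^{1/3}+Cr$ gives $c_{n+1}-c_n\ge\int_0^1 p(2-r)^{-1}\,dr\gtrsim\psi''(y_{n+1})^{-2/3}$. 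If $\psi''\to0$ the gain $\psi''(y_{n+1})^{-2/3}$ dominates $\psi''(y_{n+1})^{-1/3}$, and if $\psi''\to L>0$ then $\psi''(y_n)/\psi''(y_{n+1})\to1$ while $c_{n+1}-c_n\ge 2/\psi''(0)$; either way $q_n\le q<1$ for all large $n$.

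Now (a)--(e) follow by summation. For (a), $\int_0^\infty e^{\ell-\psi}=O(1)+\sum_n\int_{y_n}^{y_{n+1}}e^{\ell-\psi}\asymp\sum_n e^{-c_n}\tau_n$ and $e^{-c_{n+1}}\tau_{n+1}=q_n\,e^{-c_n}\tau_n$, so the series converges. For (b), split $\int_{y_n-\delta}^\infty=\int_{y_n-\delta}^{y_n}+\sum_{m\ge n}\int_{y_m}^{y_{m+1}}$: the tail sum is $\asymp\sum_{m\ge n}e^{-c_m}\tau_m\le Ce^{-c_n}\tau_n$ by the $q$-estimate, and on $[y_n-\delta,y_n]\subset(y_{n-1},y_n)$ the expansion around $y_n$ reads $\ell-\psi=-c_n-2(t-y_n)-(g_n(t-y_n)-g_n(0))\le -c_n+2\delta$, so $\int_{y_n-\delta}^{y_n}e^{\ell-\psi}\le\delta e^{2\delta}e^{-c_n}\le Ce^{-c_n}\tau_n$. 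Statement (c) is symmetric: $\int_0^{y_n-\delta}=\sum_{m\le n-2}\int_{y_m}^{y_{m+1}}+\int_{y_{n-1}}^{y_n-\delta}$, where $\sum_{m\le n-2}\int_{y_m}^{y_{m+1}}e^{\ell-\psi+2t}\asymp\sum_{j\le n-1}e^{2y_j-c_j}\tau_j\le Ce^{2y_n-c_n}\tau_n$ (each ratio being $\widetilde q_j\le\widetilde q<1$) and, from the expansion around $y_n$ on $(y_{n-1},y_n)$, $\int_{y_{n-1}}^{y_n-\delta}e^{\ell-\psi+2t}=e^{2y_n-c_n}\int_\delta^{y_n-y_{n-1}}e^{-(g_n(-r)-g_n(0))}\,dr\le Ce^{2y_n-c_n}\tau_n$. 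Finally (d) and (e) are pure geometric sums, $\sum_{s\le n}e^{c_s}(\psi''(y_s))^{1/2}=e^{c_n}\tau_n^{-1}\sum_{s\le n}\prod_{k=s}^{n-1}q_k\asymp e^{c_n}\tau_n^{-1}$ and $\sum_{s\ge n}e^{c_s-2y_s}(\psi''(y_s))^{1/2}=e^{c_n-2y_n}\tau_n^{-1}\sum_{s\ge n}\prod_{k=n}^{s-1}\widetilde q_k\asymp e^{c_n-2y_n}\tau_n^{-1}$, the lower bounds being the $s=n$ terms.

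The technical heart --- and the only point where the exponent $5/3$ is actually used --- is keeping $y_{n+1}-y_n$ and $c_{n+1}-c_n$ bounded below appropriately in the regime where $\psi''$ decreases to $0$ and may drop sharply over a single interval $[y_n,y_{n+1}]$: both the uniformity of the Laplace asymptotics (the interval must not be short compared with the local Gaussian scale $\tau_{n+1}$) and the geometric decay of $q_n$ rest on the Lipschitz continuity of $(\psi'')^{1/3}$ as a function of $\psi'$.
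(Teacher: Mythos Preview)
Your argument is correct and, at bottom, the same as the paper's: both reduce (a)--(e) to (i) a Laplace estimate $\int_{y_n}^{y_{n+1}}e^{\ell-\psi}\lesssim e^{-c_n}\alpha_n^{-1/2}$ coming from \eqref{e31}--\eqref{e32}, and (ii) the increment bound $c_{n+1}-c_n\gtrsim\alpha_{n+1}^{-2/3}$, which forces geometric growth of $e^{c_n}\alpha_n^{1/2}$ and of $e^{2y_n-c_n}\alpha_n^{-1/2}$. The paper obtains (ii) by direct piecewise lower bounds on $u'=(\psi-\ell)'$ and $w'=(\ell+2\cdot-\psi)'$ and then says the conclusions ``follow immediately''; you instead change variables to $v=\psi'(t)-(2n+2)$, so that $c_{n+1}-c_n=\int_0^2 v\,p(v)^{-1}\,dv$, $y_{n+1}-y_n=\int_0^2 p(v)^{-1}\,dv$, and note that $|\psi'''|=O((\psi'')^{5/3})$ is precisely Lipschitz continuity of $p^{1/3}$. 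This repackaging is a bit more transparent---in particular it makes explicit the control of the factor $\tau_{n+1}/\tau_n$ in $q_n$, which the paper leaves to the reader---but the two proofs use the same information in the same way. Two cosmetic remarks: the lower bounds in your two-sided Laplace asymptotics are never used downstream, so the length conditions on $y_{n+1}-y_n$ can be dropped; and in (c) your decomposition omits $\int_0^{y_0}$, a harmless bounded constant.
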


\begin{proof} Set $u=\psi-\ell$. Then
\begin{align*}
u'(y_n+t)&\ge C\alpha_n t, \qquad 0\le t\le C\alpha_n^{-2/3},\\
u'(y_n+t)&\ge C\alpha_n^{1/3}, \qquad C\alpha_n^{-2/3}<t<y_{n+1}-y_n,\\
u'(y_n-t)&\ge 1, \qquad 0\le t\le C\alpha_n^{-2/3}.
\end{align*} 
Therefore,
$$
u(y_n)\ge u(y_{n-1})+C\alpha_n^{-2/3}.
$$
Since $\psi''$ does not increase, (a), (b), and (d) follow immediately.

Next, set $w(t)=\ell(t)+2t-\psi(t)$. Then
\begin{align*}
w'(y_n+t)&\ge 1, \qquad 0\le t\le C\alpha_n^{-2/3},\\
w'(y_n-t)&\ge C\alpha_n t, \qquad 0\le t\le C\alpha_n^{-2/3},\\
w'(y_n-t)&\ge C\alpha_n^{1/3}, \qquad C\alpha_n^{-2/3}<t<y_{n}-y_{n-1}.
\end{align*} 
Therefore,
$$
w(y_{n+1})\ge w(y_n)+C\alpha_n^{-2/3}.
$$
Since $\psi''$ does not increase, (c) and (e) follow immediately.
\end{proof}

Set 
$$
E(z)=\prod_{n\ge 0}\Bigl(1-\frac{z}{\lambda_n}\Bigr).
$$
Arguing like in the proof of Lemma~\ref{l2}, for small $\delta>0$ we obtain 
\begin{align*}
|E(e^t)|^2&\le Ce^{\ell(t)},\qquad t\ge 0,\\
|E(e^t)|^2&\asymp e^{\ell(t)},\qquad \dist(t,\{y_n\}_{n\ge 0})\ge \delta.
\end{align*}
Therefore, for every entire function $F\not\equiv 0$ we have 
\begin{equation}
FE\not\in\mathcal F_h.
\label{es1}
\end{equation}
Indeed,
$$
\int_{\mathbb C}|F(z)|^2|E(z)|^2e^{-h(z)}\,dm(z)\ge 
C\int_0^\infty e^{\ell(t)-\psi(t)+2t}\, dt=\infty.
$$
In a similar way, by Lemma~\ref{l3q}~(a), $E/(\cdot-\lambda_0)\in \mathcal F_h$.
Furthermore,
\begin{equation}
|E'(\lambda_n)|^2\asymp e^{\ell(y_n)-2y_n}.
\label{es2}
\end{equation}

Set $d\mu(z)=|E(z)|^2e^{-h(z)}\,dm(z)$, $v_n=e^{\psi(y_n)-\ell(y_n)}(\psi''(y_n))^{1/2}$, $\Omega_n=\{z:e^{y_n-\delta}<|z|<e^{y_{n+1}-\delta}\}$, $n\ge 0$.
By Lemma~\ref{l3q}~(b), we have 
\begin{gather}
\label{et1}\int_{\Omega_n}\frac{d\mu(z)}{|z-\lambda_n|^2}=\int_{e^{y_n-\delta}<|z|<e^{y_{n+1}-\delta}}\Bigl|\frac{E(z)}{z-e^{y_n}}\Bigr|^2e^{-h(z)}\,dm(z)\\ 
\notag\asymp
\int_{y_n-\delta}^{y_{n+1}-\delta} e^{\ell(t)-2(t-y_n)^+-\psi(t)+2(t-y_n)}\, dt\le \int_{y_{n}-\delta}^\infty e^{\ell(t)-\psi(t)}\, dt  \\ \le C\frac{e^{\ell(y_n)-\psi(y_n)}}{(\psi''(y_n))^{1/2}}=\frac{C}{v_n},\qquad n\ge 0.\notag
\end{gather}
Furthermore, again by Lemma~\ref{l3q}~(b), we have 
\begin{gather*}
\sum_{m=n+1}^\infty\int_{\Omega_m}\frac{d\mu(z)}{|z|^2}=\int_{y_{n+1}-\delta}^\infty e^{\ell(t)-\psi(t)}\, dt\le \frac{C}{v_{n+1}},\quad n\ge 0.
\end{gather*}
By Lemma~\ref{l3q}~(d), we have 
\begin{gather*}
\sum_{s=0}^n v_s\asymp v_n,\quad n\ge 0.
\end{gather*}
Therefore, 
\begin{equation}
\sum_{s=0}^n v_s \sum_{m=n+1}^\infty\int_{\Omega_m}\frac{d\mu(z)}{|z|^2}\le C,\quad n\ge 0.
\label{et2}
\end{equation}

Next, by Lemma~\ref{l3q}~(c), we have 
\begin{gather*}
\sum_{m=0}^n\int_{\Omega_m}d\mu(z)=\int_0^{y_{n+1}-\delta} e^{\ell(t)-\psi(t)+2t}\, dt \le C\frac{e^{2y_{n+1}}}{v_{n+1}},\quad n\ge 0.
\end{gather*}
By Lemma~\ref{l3q}~(e), we have 
\begin{gather*}
\sum_{s=n+1}^\infty \frac{v_s}{|\lambda_s|^2}\asymp \frac{v_{n+1}}{|\lambda_{n+1}|^2},\quad n\ge 0.
\end{gather*}
Therefore, 
\begin{equation}
\sum_{s=n+1}^\infty \frac{v_s}{|\lambda_s|^2} \sum_{m=0}^n\int_{\Omega_m}d\mu(z)\le C,\quad n\ge 0.
\label{et3}
\end{equation}

Set $\Lambda=\{\lambda_n\}_{n\ge 0}$ and consider the mappings $T_\Lambda$ and $R_\Lambda$ defined in Subsection~\ref{ss11}. 
The argument in the proof of Lemma~\ref{l2q} implies that the mapping $T_\Lambda$ 
is bounded from $\mathcal F_h$ to $\ell^2(1/\|{\bf k}_\lambda\|^2)$. 
By \eqref{es1}, $T_\Lambda$ is injective. 
By Lemma~\ref{l2q} and by \eqref{es2}, $v_n\asymp \|{\bf k}_{\lambda_n}\|^2/|E'(\lambda_n)|^2$. 
Estimates \eqref{et1}--\eqref{et3} permit us to apply Theorem~1.1 in \cite{BMS} to show that 
$d\mu$ is a Carleson measure for the space of the corresponding discrete Hilbert transforms.  
Thus, 
$R_\Lambda$ is bounded from $\ell^2(1/\|{\bf k}_\lambda\|^2)$ to $\mathcal F_h$. 
By the argument in Subsection~\ref{ss11} 
we conclude that 
$\{\Bbbk_{\lambda_n}\}_{n\ge 0}$ is a Riesz basis in $\mathcal F_h$.

\section{Proof of Theorem~\ref{t3}}
\label{pt3}

In this section we construct an increasing weight $h$ and a corresponding set $\Lambda$ consisting of (an increasing amount of) equidistributed points on rapidly growing concentric circles such that $\{\Bbbk_{\lambda}\}_{\lambda\in\Lambda}$ is a Riesz basis in $\mathcal F_h$.

Given an increasing sequence $\{R_n\}_{n\ge 1}$ we set 
\begin{multline*}
\psi(t)=t+2\sum_{s\le n}s(t-\log R_s)+n\min(t-\log R_n,\log R_{n+1}-t), \\ \log R_n\le t\le \log R_{n+1},\, n\ge 1.
\end{multline*}
Clearly, $\psi$ is an increasing function. 

An elementary geometric argument permits us to choose a sequence $\{R_n\}_{n\ge 1}$ 
satisfying the following two properties:
$$
\text{\rm (I)}\qquad R_1\ge 2,\qquad R_{n+1}\ge R^2_n,\quad n\ge 1
$$
and
$$
\text{\rm (II)}\qquad \psi(t)=o(\phi(t)), \qquad t\to\infty.
$$
In particular, $\log R_n\ge 2^n\log 2$. 
As above, we fix $h(t)=\psi(\log t)$. Next, we define
$$
E(z)=\prod_{n\ge 1}\Bigl(1-\Bigl(\frac{z}{R_n}\Bigr)^n\Bigr),\qquad \Lambda=Z(E).
$$

By an argument similar to that in the proof of Lemma~\ref{l2}, we have
\begin{align}
&|E(z)|\asymp e^{h(z)/2}\frac{n\dist(z,\Lambda)}{R^{3/2}_n},\qquad \bigl||z|-R_n\bigr|\le \frac{R_n}{n},\notag\\
&|E'(\lambda)|\asymp e^{h(R_n)/2} \frac{n}{R^{3/2}_n},\qquad \lambda\in\Lambda,\, |\lambda|=R_n,\label{e14}\\
&|E(z)|\asymp e^{h(z)/2}|z|^{-1/2}\Bigl| \frac{R_n}{z}\Bigr|^{n/2},\qquad \frac{n+1}{n}R_n\le |z|\le \sqrt{R_nR_{n+1}},\notag\\
&|E(z)|\asymp e^{h(z)/2}|z|^{-1/2}\Bigl| \frac{z}{R_{n+1}}\Bigr|^{n/2},\qquad \sqrt{R_nR_{n+1}}\le |z|\le \frac{nR_{n+1}}{n+1}.\notag
\end{align}
Let $F\not=0$ be an entire function. Then
\begin{multline}
\int_{\CC}|F(z)|^2|E(z)|^2e^{-h(z)}\,dm(z)\\ \ge 
C \sum_{n\ge 1}\int_{\frac{n-1}{n}\le \frac{|z|}{R_n}\le \frac{n+1}{n}}|F(z)|^2\frac{n^2\dist^2(z,\Lambda)}{R^3_n}\,dm(z)
\ge \sum_{n\ge 1}\frac{R_n}{n}=\infty.
\label{e01}
\end{multline}
Thus, $E\not\in\mathcal F_h$ and for any entire function $F\not=0$ we have $FE\not\in\mathcal F_h$.

Given $\lambda\in\Lambda$, we set $E_\lambda=\frac{E}{\cdot-\lambda}$. 
Next, we are going to deal with the scalar products $\langle E_\lambda,E_\mu\rangle_{\mathcal F_h}$. 
For $\lambda\in\Lambda$, $|\lambda|=R_n$, we have 
\begin{multline}
\|E_\lambda\|^2=\int_{\CC}\frac{|E(z)|^2}{|z-\lambda|^2}e^{-h(z)}\,dm(z)\\ \le 
C \Bigl(\int_{\frac{n-1}{n}\le \frac{|z|}{R_n}\le \frac{n+1}{n}}\frac{n^2\dist^2(z,\Lambda)}{R^3_n\,|z-\lambda|^2}\,dm(z)
\\+\sum_{s\ge 1,\,s\not=n}\int_{\frac{s-1}{s}\le \frac{|z|}{R_s}\le \frac{s+1}{s}}\frac{s^2\dist^2(z,\Lambda)}{R^3_s\,|z-\lambda|^2}\,dm(z)
\\+\sum_{s\ge 1}\int_{\frac{s+1}{s}R_s\le |z|\le \sqrt{R_sR_{s+1}}}
\Bigl|\frac{R_s}{z}\Bigr|^s\frac{dm(z)}{|z|\cdot|z-\lambda|^2} 
\\+\sum_{s\ge 1}\int_{\sqrt{R_sR_{s+1}}\le |z|\le \frac{s}{s+1}R_{s+1}}
\Bigl|\frac{z}{R_{s+1}}\Bigr|^s\frac{dm(z)}{|z|\cdot|z-\lambda|^2}
\Bigr)
\\ \le \frac{C}{R_n}+C\sum_{s\ge 1,\,s\not=n}\frac{R_s}{s\max(R_s,R_n)^2}\asymp\frac1{R_n}.
\label{e2}
\end{multline}
Furthermore, if $\lambda,\mu\in\Lambda$, $R_j=|\lambda|<|\mu|=R_k$, then in a similar way 
\begin{equation}
\int_{\CC}\frac{|E(z)|^2}{|z-\lambda|\cdot|z-\mu|}e^{-h(z)}\,dm(z)\le \frac{C\log(k+1)}{R_k}.
\label{e3}
\end{equation}

Next we fix $n\ge 1$ and set $R=R_n$. Then 
$$
E(z)=\Bigl(1+O\Bigl(\frac{1}{R}\Bigr)\Bigr)\Bigl(1-\Bigl(\frac{z}{R}\Bigr)^n\Bigr)\prod_{1\le s<n}\Bigl(\frac{z}{R_s}\Bigr)^s,\quad \bigl||z|-R\bigr|\le\frac{R}{2}.
$$
Let $\alpha\not=\beta$ be such that $|\alpha|=|\beta|=1$, and let $\lambda=\alpha R$, $\mu=\beta R$ belong to $\Lambda$. We have 
\begin{gather*}
\int_{\CC}\frac{|E(z)|^2}{(z-\lambda)\overline{(z-\mu)}}e^{-h(z)}\,dm(z)\\=O\Bigl(\frac{1}{nR}\Bigr)+
\int_{R/2\le |z|\le R}\frac{\Bigl(1-\Bigl(\dfrac{z}{R}\Bigr)^n\Bigr)\Bigl(1-\Bigl(\dfrac{\overline{z}}{R}\Bigr)^n\Bigr)}{(z-\lambda)\overline{(z-\mu)}}
\Bigl|\frac{z}{R}\Bigr|^{n-1}\,\frac{dm(z)}{|z|}\\+
\int_{R\le |z|\le 3R/2}\frac{\Bigl(1-\Bigl(\dfrac{z}{R}\Bigr)^n\Bigr)\Bigl(1-\Bigl(\dfrac{\overline{z}}{R}\Bigr)^n\Bigr)}{(z-\lambda)\overline{(z-\mu)}}
\Bigl|\frac{R}{z}\Bigr|^{3n}\,\frac{dm(z)}{|z|}\\
=O\Bigl(\frac{1}{nR}\Bigr)+\frac{1}{R}\int_{1/2}^1 \Bigl(\sum_{s=0}^{n-1}\alpha^s w^{n-1-s}\Bigr) \overline{\Bigl(\sum_{s=0}^{n-1}\beta^s w^{n-1-s}\Bigr)}|w|^{n-2}\,dm(w)\\
+\frac{1}{R}\int_1^{3/2} \Bigl(\sum_{s=0}^{n-1}\alpha^s w^{n-1-s}\Bigr) \overline{\Bigl(\sum_{s=0}^{n-1}\beta^s w^{n-1-s}\Bigr)}|w|^{-3n-1}\,dm(w)\\
=O\Bigl(\frac{1}{nR}\Bigr)+\frac{1}{R} \sum_{s=0}^{n-1} 2\pi (\alpha\overline{\beta})^s
\Bigl(\frac1{2s+n+1}+\frac1{3n-2s-2}\Bigr).
\end{gather*}
Set
\begin{gather*}
a_s=\frac1{2s+n+1}+\frac1{3n-2s-2},\qquad 0\le s<n,\\
b_{j,k}=\sum_{s=0}^{n-1}e^{2\pi i(j-k)s/n}a_s,\qquad 0\le j,k< n,\\
A=\bigl(b_{j,k}\bigr)_{0\le j,k< n}.
\end{gather*}
Then $A$ is a circulant matrix. It is well-known and easily verified that $A$ has an orthonormal basis of eigenvectors $n^{-1/2}(e^{2\pi i kq/n})_{0\le k<n}$, 
$0\le q<n$, and the corresponding eigenvalues are $\sum_{k=0}^{n-1}b_{0,k}e^{2\pi i kq/n}$, $0\le q<n$. Therefore, 
the norm of the corresponding operator acting on $\ell^2_n$ is equal to 
$$
\max_{0\le q<n}\Bigl| \sum_{k=0}^{n-1}e^{2\pi ikq/n}\sum_{s=0}^{n-1}e^{-2\pi iks/n}a_s \Bigr|=n\max_{0\le q<n}|a_q|=O(1),\qquad n\to\infty.
$$
Thus, if 
$$
B_n=\Bigl(\langle E_\lambda,E_\mu\rangle_{\mathcal F_h}\Bigr)_{\lambda,\,\mu\in\Lambda,\,|\lambda|=|\mu|=R_n},  
$$
then 
\begin{equation}
\|B_n\|_{\ell^2_n\to\ell^2_n}=O\Bigl(\frac{1}{R_n}\Bigr),\qquad n\to\infty.
\label{e5}
\end{equation}

Now, for $|\lambda|=R_n$ we estimate $\|{\bf k}_\lambda\|$. Applying the mean value theorem to the function 
$$
g(z)=f(z)\prod_{1\le s<R_n}\Bigl(\frac{R_s}{z}\Bigr)^s
$$
in the disc $D=\{z:|z-\lambda|<R_n/n\}$, we obtain that 
\begin{equation}
|f(\lambda)|^2e^{-h(\lambda)}\le \frac{Cn^2}{R_n^2}\int_D |f(w)|^2e^{-h(w)}\, dm(w),
\label{e7}
\end{equation}
and hence, 
$$
\|{\bf k}_\lambda\|\le Ce^{h(R_n)/2}\frac{n}{R_n},\qquad |\lambda|=R_n.
$$

By \eqref{e14} and \eqref{e2}, for $\lambda\in \Lambda$ such that $|\lambda|=R_n$ we have 
$$
\|{\bf k}_\lambda\|\ge \frac{|E_\lambda(\lambda)|}{\|E_\lambda\|}=\frac{|E'(\lambda)|}{\|E_\lambda\|}\ge Ce^{h(R_n)/2}\frac{n}{R_n}.
$$
Replacing $E(z)$ by $E(e^{i\theta}z)$ we get the same estimate for all $\lambda$ such that $|\lambda|=R_n$. Thus, 
\begin{equation}
\frac1C e^{h(R_n)/2}\frac{n}{R_n}\le \|{\bf k}_\lambda\|\le Ce^{h(R_n)/2}\frac{n}{R_n},\qquad |\lambda|=R_n,
\label{exy}
\end{equation}
for some $C$ independent of $\lambda$ and $n$. 

Now, we consider the mappings $T_\Lambda$ and $R_\Lambda$ defined in Subsection~\ref{ss11}. Estimate 
\eqref{e7} implies that the mapping $T_\Lambda$ 
is bounded from $\mathcal F_h$ to $\ell^2(1/\|{\bf k}_\lambda\|^2)$. 
By \eqref{e01}, $T_\Lambda$ is injective. 
Set $e_\lambda=\{\|{\bf k}_\lambda\|\delta_{\lambda\mu}\}_{\mu\in\Lambda}$. Then $\|e_\lambda\|_{\ell^2(1/\|{\bf k}_\lambda\|^2)}=1$ and 
$R_\Lambda e_\lambda=E_\lambda\|{\bf k}_\lambda\|/E'(\lambda)$. 
Estimates \eqref{e14}, 
\eqref{e3}, \eqref{e5}, and \eqref{exy} show that the matrix $(\langle R_\Lambda e_\lambda,R_\Lambda e_\mu\rangle)_{\lambda,\mu\in\Lambda}$ consists of (i) a sequence of squares 
along the diagonal with uniformly bounded norms and (ii) rapidly decaying off-diagonal terms. 
Thus, $R_\Lambda$ is bounded from $\ell^2(1/\|{\bf k}_\lambda\|^2)$ to $\mathcal F_h$. 
By the argument in Subsection~\ref{ss11} 
we conclude that 
$\{\Bbbk_\lambda\}_{\lambda\in\Lambda}$ is a Riesz basis in $\mathcal F_h$.

Finally, for $n\ge 3$ we set 
$$
f_n(z)=\frac{E(z)}{z-R_n}\prod_{1\le s<n/2}\frac{z-e^{-2\pi i s/n}R_n}{z-e^{2\pi i s/n}R_n}.
$$
Then $|E_{R_n}|=|f_n|$ on the real line, and  
\begin{multline*}\label{e4}
\|f_n\|^2=\int_{\CC}|f_n(z)|^2e^{-h(z)}\,dm(z)\\ \ge 
\int_{\bigl||z|-R_n\bigr|\le\frac{R_n}{n},\,|\arg z-\pi/2|\le \pi/4}|f_n(z)|^2e^{-h(z)}\,dm(z)\\ \ge 
\frac{C}{nR_n}2^{n/20}\ge n\|E_{R_n}\|^2,\qquad n\to\infty.
\end{multline*}
This shows that $\mathcal F_h$ is not (up to the norm equivalence) isomorphic to a de Branges space.

\section{Proof of Proposition~\ref{ex}}
\label{pex}

Let $\mathcal F_h=\mathcal F_{h_0}$ (up to the norm equivalence), and let the polynomials belong to $\mathcal F_{h_0}$. We have 
$$
\|z^n\|^2_{\mathcal F_h}=\int_0^\infty 2\pi r^{2n+1}e^{-h(r)}\,dr=\int_0^\infty x^{n}u_h(x)\,dx,
$$
where $u_h(x)=\pi e^{-h(\sqrt{x})}$.

If $h$ is continuous and non-decreasing, we can find a non-increasing $\tilde u\in C^1(\mathbb R)$ such that $u_h\le \tilde u\le 2u_h$, $\tilde u'\le 0$.
Since
$$
\int_0^\infty x^{n}\tilde u(x)\,dx=-\frac1{n+1}\int_0^\infty x^{n+1}\tilde u'(x)\,dx,
$$
by the Cauchy--Schwarz inequality we obtain that for some bounded sequence $\{c_n\}_{n\ge 1}$,
\begin{equation}
\text{the sequence\quad}\bigl\{ \log[n\|z^n\|^2_{\mathcal F_{h_0}}]+c_n\bigr\}_{n\ge 1}\text{\quad is convex}.
\label{e19}
\end{equation}

On the other hand, let 
$$
p_n=\log \int_0^\infty x^{n}\,d\mu(x),
$$
where 
$$
d\mu(x)=\sum_{k\ge 1}e^{-s_kt_k}\delta_{\exp s_k},
$$
$t_k\in \mathbb N$, $s_k\ge 1$, $k\ge 0$. 
If $t_k>t_{k-1}+1$, $s_k>2t_ks_{k-1}$, $k>1$, then
$$
p_n=\log\sum_{k\ge 1}e^{(n-t_k)s_k}=\max_{k\ge 1}[(n-t_k)s_k]+O(1),\qquad n\ge 1,
$$
and
$$
p_n=(n-t_k)s_k+O(1) 
$$
for $t_k+1\le n\le t_{k+1}$, $k\ge 1$.

Finally, let $t_k\ge t^2_{k-1}$, $k>1$, let $u_{h_0}$ be a continuous positive function such that 
$$
\Bigl|p_n-\log \int_0^\infty x^{n}u_{h_0}(x)\,dx\Bigr|\le 1,\qquad n\ge 1,
$$
and let $u_{h_0}(x)=\pi e^{-h_0(\sqrt{x})}$. Then $h_0$ is continuous and there exists a bounded sequence $\{d_n\}_{n\ge 1}$ such that the sequence 
$$
\bigl\{ \log[\|z^n\|^2_{\mathcal F_{h_0}}]+d_n\bigr\}_{n\ge 1}
$$
is linear for $t_k+1\le n\le t_{k+1}$, $k\ge 1$, which contradicts to \eqref{e19}.

\end{document}